\numberwithin{equation}{section}
\newtheorem{theorem}{Theorem}[section]
\newtheorem{lem}[theorem]{Lemma}
\newtheorem{thm}[theorem]{Theorem}
\newtheorem{pro}[theorem]{Proposition}
\newtheorem{cor}[theorem]{Corollary}
\newcounter{Cnumber}
\def\s{\,\,\,\,}
\def\dint{\displaystyle{\int}}
\def\mv{1.7ex}
\def\endproof{$\hfill\Box$\\}
\def\R{\mathbb{R}}
\def\C{\mathbb{C}}
\def\CP{\mathbb{CP}}
\def\g{\overline{g}}
\def\Sdk{\Sigma_{\delta,\epsilon_k}}
\def\p{\partial}
\def\va{\varphi}
\def\c{{\mathcal C}}
\title[extendability of Conformal structures on punctured surfaces]
{extendability of Conformal structures on punctured surfaces}
\author[J. Chen \& Y. Li]
{Jingyi Chen and Yuxiang Li}
\address{ Department of Mathematics\\ The University of British Columbia, Vancouver, BC V6T1Z2, Canada}
\email{jychen@math.ubc.ca}
\address{Department of Mathematical Sciences, Tsinghua University, Beijing 100084, China}
\email{yxli@math.tsinghua.edu.cn}
\thanks{The first author acknowledges the partial support from NSERC. The second author is partially supported by NSFC. }
\begin{document}
\maketitle

\begin{abstract}
For a smooth immersion $f$ from the punctured disk $D\backslash\{0\}$ into $\R^n$ extendable continuously at the puncture, if its
mean curvature is square integrable and the measure of $f(D)\cap B_{r_k}=o(r_k)$ for a sequence $r_k\to 0$, we show that the Riemannian surface $(D_r\backslash\{0\},g)$ where $g$ is the induced metric is conformally equivalent to the unit Euclidean punctured disk, for any $r\in(0,1)$. For a locally $W^{2,2}$ Lipschitz immersion $f$ from the punctured disk $D_2\backslash\{0\}$ into $\R^n$, if $\|\nabla f\|_{L^\infty}$ is finite and the second fundamental form of $f$ is in $L^2$, we show that there exists a homeomorphism $\phi:D\to D$ such that
$f\circ\phi$ is a branched $W^{2,2}$-conformal immersion from the Euclidean unit disk $D$ into $\R^n$.
\end{abstract}

\section{Introduction}

In two dimensional variation, especially conformally invariant, problems in differential geometry, it is important to know whether islolated singularities are removable while preserving conformal properties. 
In this paper, we shall study the problem of extending conformal structure of an immersion from a punctured 2-dimensional disk across the puncture as a branched conformal immersion.  For an immersion of class $C^{2,\alpha}$ from a punctured disk with bounded mean curvature $H$ and has a unique limit point at the puncture, Gulliver showed in \cite{Gulliver} that there exists a $C^{1,\alpha}$ conformal mapping from the disk such that its restriction to the punctured disk is a $C^{2,\alpha}$ parametrization of the immersion. The minimal surface case was proved by Osserman in \cite{Osserman}. We shall relax the pointwise bound on $H$ by integral bounds. We shall also be concerned with immersions from a punctured disk with lower regularity. For immersions from a disk (not punctured) of class $C^{1,\alpha}$ (the first fundamental form is $C^\alpha$), the classical theorem of Korn-Lichtenstein asserts existence of isothermal coordinates (a simplified proof was given by Chern \cite{Chern}). When the induced metric is merely bounded measurable with $g_{11}g_{22}-g_{12}^2\geq c$ almost everywhere for some positive constant $c$, Morrey's measurable Riemann mapping theorem states that there is a homeomorphism from the unit disk to a neighborhood of a point in the immersed disk satisfying the conformality conditions almost everywhere. Yet, in this generality, the reparametrized immersion does not admit high regularity.
It is advantageous to work with immersions that are in the space $W^{2,2}\cap W^{1,\infty}$ since the $L^2$-norm of the second fundamental form can then be defined.

In Section 2, we show in Theorem \ref{t1} that for a smooth immersion $f$ from the punctured disk $D\backslash \{0\}$ into $\R^n$ which admits a continuous extension at $0$ sending $0\in D$ to $0\in\R^n$,  if its mean curvature $H$ is square integrable and the length of $f(D)\cap \p B_{r_k}\to 0$, or more generally, $\mu(f(D)\cap B_{r_k})=o(r_k)$, along some sequence $r_k\to 0$, then $(D_r\backslash\{0\}, f^*g_{\R^n})$ is conformal to $(D\backslash\{0\},g_{\R^2})$ for any $0<r<1$.  The reason of taking a smaller disk is due to lack of control on $f$ near $\p D$. This generalizes the result in \cite{Gulliver} for $H$ is assumed to be pointwise bounded by a uniform constant therein.

In Section 3, we consider $W^{2,2}\cap W^{1,\infty}$ immersions $f$ from the punctured disk that admits a continuous extension at the puncture. There are two problems to be resolved. First, we need to find a conformal structure $\c$ compatible with the induced metric $g$ by $f$ away from the singularity. This does not directly follow from Morrey's result if we require the reparametrized mapping $f\circ\varphi$ lies in $W^{1,\infty}$: there exists a $W^{2,2}$ map $\varphi:D\rightarrow D$, such that $\varphi^*(g)=e^{2u}g_0$, however, it is not easy to deduce $u\in L^\infty$ directly from Morrey's arguments. Second, we need to determine the conformal type of the punctured disk with respect to the conformal structure $\c$ (a Riemann surface). In fact, we shall show that the Riemann surface is conformally equivalent to the punctured Euclidean disk. Hence, the composition of $f$ with a reparametrization
extends across the puncture as a branched $W^{2,2}$-conformal immersion.

Our strategy is as follows. In section 3.1, we establish a key technical result (Proposition \ref{t2}): for a conformal immersion locally in $W^{2,2}$ from an annulus, there is a positive lower bound on the length measured in the induced metric $g$ for all homotopically non-trivial loops. This is used to rule out the conformal types of annuli in Theorem \ref{conformal.W22}. Since there is a conformal structure compatible with the induced metric by a {\it smooth} immersion,  we can apply Proposition \ref{t2} to prove Theorem \ref{t3}:
For a smooth immersion $f:D\backslash\{0\}\to\R^n$ with $A\in L^2$ and $\|\nabla f\|_{L^\infty(D\backslash\{0\})}<\infty$, the Riemann surface $(D_\delta\backslash\{0\},g)$ is conformally diffeomorphic  to $(D\backslash\{0\},g_0)$ for any $0<\delta<1$.

In section 3.2,  we show that there exists a complex structure ${\mathcal C}$ on the punctured disk arising from the Lipschitz map $f$. The coordinate maps defining the complex structure ${\mathcal C}$ are in $W^{2,2}_{loc}\cap W^{1,\infty}_{loc}$, but the transition functions between coordinate charts are holomorphic. This is done by an approximation argument together with results on mappings with square integrable generalized second fundamental forms \cite{KSc,Helein,K-L,M-S}. For existence of isothermal coordinates of Lipschitz immersions with $L^2$-bounded second fundamental form, an approach via the method of moving frames was given in \cite{Ri}. Note that as a positive lower bound (almost everywhere) on the metric tensor is imposed, ${\mathcal C}$ needs not to be extendable across the puncture.

In section 3.3, we derive Theorem \ref{conformal.W22} that can be stated as: For a $W^{2,2}_{loc}\cap W^{1,\infty}$ mapping $f$ from the punctured Euclidean disk (of radius 2) $D_2\backslash\{0\}$ into $\R^n$,  if $f$ is a $C^0$ immersion with $df\otimes df>C(K)\,g_0>0$ almost everywhere on any compact set $K$ in $D\backslash\{0\}$ and its (generalized) second fundamental form is in $L^2$, then there exists a homeomorphism $\phi:D\to D$ such that $f\circ\phi^{-1}$ is a branched $W^{2,2}$-conformal immersion of the Euclidean disk $D$ into $\R^n$ with $0$ as its only possible branch point.  Moreover, the maps $\phi,\phi^{-1}$ are in $W^{2,2}_{loc}\cap W^{1,\infty}_{loc}$ on $D\backslash\{0\}$ with $\phi(0)=0$ and $(f\circ\phi)^*g_{\R^n}=e^{2u}g_{\R^2}$ on $D\backslash\{0\}$, where $u\in W^{1,2}_{loc}\cap L^\infty_{loc}$ on $D\backslash\{0\}$.

Theorem \ref{conformal.W22} generalizes to the global case:
Let $(\Sigma,h)$ be an oriented surface (not necessarily compact) and let
$S\subset\Sigma$ be a finite set. Suppose
$f\in W^{2,2}_{loc}(\Sigma\backslash S,\R^n)$
and $f$ is a $C^0$ immersion on $\Sigma\backslash S$. Assume  that
(1) $g=df\otimes df>C(K)h$ almost everywhere on any compact set $K\subseteq\Sigma\backslash S$, (2) $\|\nabla f\|_{L^\infty(\Sigma,h)}<\infty$ and
(3) $A\in L^2(\Sigma)$.
Then there is a complex structure ${\mathcal C}$ on $\Sigma$ such that $f:(\Sigma,{\mathcal C})\to\R^n$ is a branched $W^{2,2}$-conformal immersion with its branch locus contained in $S$.

The results in this paper remain valid if the ambient space $\R^n$ is replaced by a smooth compact Riemannian manifold $M^n$, via Nash's isometric embedding theorem.

\section{Smooth immersions from a punctured disk}

\begin{thm}\label{t1}
Let $D$ be the open unit disk in $\R^2$. Let $f: {D}\rightarrow\R^n$ be a continuous map with $f(0)=0$ and $f|_{D\backslash\{0\}}$ is a smooth immersion.  Set
$\mu(\Sigma\cap K) = \mu_g(f^{-1}(K))$ for any $K\subset \mathbb R^n$, where $g$ is the induced metric by $f$ on $\Sigma\backslash\{0\}$ where $\Sigma=f(D)$.
 Assume
\begin{enumerate}
\item  $\int_{\Sigma\backslash \{0\}}|H|^2_{g}d\mu_{g}<\infty$, where $H$ is the mean curvature of $\Sigma\backslash\{0\}$;
\item
There exist positive numbers  $\epsilon_k\searrow 0$
such that
$\frac{\mu(\Sigma\cap B_{\epsilon_k}(0))}{\epsilon_k}\rightarrow 0$ as $k\to\infty$.
\end{enumerate}
Then  $(D_r\backslash\{0\},g)$ is conformal to $(D\backslash\{0\},g_0)$ where $g_0$ is the Euclidean metric on $D$ and $D_r$ is the Euclidean open disk of radius $r$ for any $r\in (0,1)$.
\end{thm}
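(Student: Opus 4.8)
The plan is to show that the conformal type of a punctured disk can only be the punctured disk or an annulus $A_R = \{1 < |z| < R\}$ (including $R = \infty$, the punctured plane, and $R$ finite), and then rule out the annular cases using the hypotheses on $H$ and the area growth. The starting point is the uniformization theorem for Riemann surfaces: $(D_r\setminus\{0\}, g)$ is a Riemann surface diffeomorphic to a punctured disk, hence conformally equivalent either to $D\setminus\{0\}$ or to some round annulus $A_R$ with $1 < R \le \infty$. So everything reduces to excluding the annulus cases. In either remaining case we get a conformal immersion $\tilde f$ from the model annulus into $\R^n$, with $\tilde f^* g_{\R^n} = e^{2u} g_0$; the hypotheses transfer: $\int e^{2u}|H|^2 < \infty$ where now $H$ is the mean curvature expressed in the conformal parametrization, and the area of the image inside small extrinsic balls is $o(\epsilon_k)$ along the sequence.

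**Next I would** treat the case of a finite annulus $A_R$, $R < \infty$. Here the conformal parametrization is defined on a fixed annulus and extends continuously to the \emph{outer} boundary circle corresponding to $\partial D_r$ but, crucially, the \emph{inner} boundary circle $\{|z| = 1\}$ corresponds to the puncture, where $\tilde f$ must be constant equal to $f(0) = 0$ (by the assumed continuity of $f$ at the puncture). A conformal (hence harmonic-after-reparametrization, or more simply, a branched conformal) immersion $\tilde f: A_R \to \R^n$ that is continuous up to $\{|z|=1\}$ and constant there would, by the reflection principle or simply by looking at $|\tilde f|^2$ and its subharmonicity properties for conformal immersions with $L^2$ mean curvature (using the Gauss equation / monotonicity), force $\tilde f$ to be constant — contradicting that it is an immersion. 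More concretely, I would use a monotonicity-type or isoperimetric argument: for a conformally immersed surface with $\int|H|^2 < \infty$ and $\mu(\Sigma \cap B_\rho) \to 0$ faster than $\rho$, Simon's monotonicity formula (with the mean curvature correction term controlled by $\|H\|_{L^2}$ on small balls, which tends to $0$) gives $\mu(\Sigma \cap B_\rho(0)) \ge c \rho^2$ unless the density at $0$ vanishes, and density zero at an interior-type point forces the surface to miss a neighborhood of $0$ — incompatible with $0$ being a limit point of $f$. Thus the puncture cannot be ``filled'' by a finite boundary circle; the finite-annulus case is excluded.

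**Then I would** treat the case $R = \infty$, i.e. the punctured plane $\C\setminus\{0\} \cong A_\infty$; here one of the two ends corresponds to the puncture and the other to $\partial D_r$. Equivalently, after an inversion, this is again a punctured-disk picture but now the conformal parametrization near the puncture is defined on a \emph{full} annular neighborhood of $0$ shrinking toward $0$, i.e. the puncture has been given a neighborhood conformal to a punctured disk of \emph{infinite} modulus — the parametrization domain near the puncture looks like $\{0 < |z| < 1\}$ with the puncture at $0$, which is precisely the desired conclusion. Wait — I must be careful: the two exceptional cases that genuinely differ from $D\setminus\{0\}$ are the finite annuli; the punctured plane $A_\infty$ is conformally the punctured disk union a disk, and restricting to a neighborhood of the puncture-end gives exactly $D\setminus\{0\}$. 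So in fact the only genuine alternative to rule out is the finite annulus, handled in the previous step. Assembling: uniformization gives the trichotomy, the monotonicity/area argument kills the finite annulus using hypotheses (1) and (2), and we conclude $(D_r\setminus\{0\}, g)$ is conformal to $(D\setminus\{0\}, g_0)$. The restriction to $D_r$ with $r < 1$ is needed only to have a relatively compact piece of the immersion with controlled geometry, since no control is assumed near $\partial D$.

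**The main obstacle** I anticipate is making the ``the puncture cannot correspond to a finite boundary circle'' step rigorous at the low regularity implicit in the problem (smooth away from the puncture, only continuous at it, with $L^2$ mean curvature and no a priori area bound except the $o(\epsilon_k)$ growth): one must run a monotonicity formula for the conformally parametrized surface up to the point $0$, absorbing the mean-curvature term via its smallness on small balls (a consequence of $\int|H|^2 < \infty$), and deduce that either the density of $\mu$ at $0$ is a positive integer multiple of $\pi$ (forcing area $\gtrsim \epsilon_k^2$, contradicting hypothesis (2)) or it is $0$ (forcing $0 \notin \overline{f(D\setminus\{0\})}$ near the puncture, contradicting $f(0)=0$ with $f$ continuous and $f$ an immersion with a limit point at $0$). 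Care is needed because the relevant ``ball'' for monotonicity is extrinsic in $\R^n$, so one has to relate $\mu(\Sigma \cap B_\rho(0))$ to intrinsic quantities on $A_R$ near $\{|z|=1\}$; the conformal factor $e^{2u}$ must be shown to be, say, summable enough near the inner circle, which again follows from finiteness of total area on $D_r$ (finite since $f$ is a smooth immersion on $\overline{D_{r'}}\setminus\{0\}$ for $r < r' < 1$ and continuous at $0$). I would present this density dichotomy as the crux of the argument.
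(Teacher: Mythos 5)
Your reduction via uniformization to the trichotomy, and your observation that the punctured--plane case restricts to the desired punctured disk, match the paper's first step; the gap is in your mechanism for excluding the finite annulus. First, the density dichotomy you make the crux of the argument yields no contradiction in its main branch: if the density of the image varifold at $0$ is a positive multiple of $\pi$, monotonicity gives $\mu(\Sigma\cap B_\rho(0))\geq c\rho^2$, and $c\rho^2=o(\rho)$ is perfectly consistent with hypothesis (2) --- indeed this is exactly what happens in the generic case where the conclusion holds and the surface is an honest immersed disk through $0$. The extrinsic density at $0$ does not detect the conformal type of the domain. Second, your claim that a conformal immersion of the annulus which is continuous up to the inner circle and constant there must be constant is unjustified at this regularity: the map is not harmonic (only $\Delta\tilde f=2e^{2u}\vec H$ with $H\in L^2$ and no control on the conformal factor $u$ near the inner circle), so no reflection principle or unique continuation is available, and constancy of a boundary trace is compatible with an annular end collapsing to a point. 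Third, before Simon's monotonicity can be run at $0$ one must show the first variation of the varifold $\Sigma_r$ has no singular part at the puncture, i.e.\ that $H$ is the generalized mean curvature of $\Sigma_r$ \emph{across} $0$; this is precisely where the $o(\epsilon_k)$ rate in hypothesis (2) enters (Lemma \ref{1} of the paper, pairing a cut-off with $|\nabla_g\eta_{\epsilon_k}|\leq C\epsilon_k^{-1}$ against $\mu(\Sigma\cap B_{2\epsilon_k}(0))=o(\epsilon_k)$), a step your sketch skips.

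The paper's actual exclusion of the finite annulus is a capacity-type argument rather than a density argument: having established the varifold structure and hence $\mu(\Sigma\cap B_{2\epsilon}(0))\leq C\epsilon^2$ by monotonicity, Lemma \ref{l2} shows that any bounded harmonic function on $(D_r\backslash\{0\},g)$ with finite Dirichlet energy vanishing on $\partial D_r$ must vanish identically; since a finite annulus carries a nonconstant such function and both harmonicity and energy are conformally invariant in dimension two, the finite-annulus type is impossible. To repair your proof you would need to replace the density dichotomy by an argument of this kind (or some other genuinely intrinsic obstruction).
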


The whole image set $\Sigma$ may not have the structure of a surface at the puncture. We will need to know that it admits the generalized mean curvature that is in $L^2$.

\begin{lem}\label{1} Under the assumptions in Theorem \ref{t1},
$\Sigma_r=f(D_r)$ is a rectifiable integral 2-varifold with generalized mean curvature in $L^2$, for any $0<r<1$.
\end{lem}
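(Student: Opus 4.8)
The plan is to realize $\Sigma_r$ as the rectifiable integral $2$-varifold $V_r$ carried by the smooth immersed surface $f(D_r\backslash\{0\})$ with multiplicity $\theta(y)=\#\bigl(f^{-1}(y)\cap (D_r\backslash\{0\})\bigr)$, to show first that $V_r$ has finite mass, and then that its first variation has no singular contribution at the puncture. The reduction to these two points is soft: since $f$ is continuous at $0$ with $f(0)=0$, for each $y\ne 0$ the fibre $f^{-1}(y)\cap (D_r\backslash\{0\})$ avoids a neighbourhood of $0$ and lies in a compact part of $D\backslash\{0\}$ on which $f$ is an immersion, hence is finite, so $\theta$ is locally bounded on $\R^n\backslash\{0\}$; moreover $\{0\}$ carries no $\mu$-mass. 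Thus once $\mu(V_r)<\infty$ is known, $V_r$ is automatically a rectifiable integral varifold, and the real content is that $\int\mathrm{div}_\Sigma X\,d\mu=-\int\langle H,X\rangle\,d\mu$ for test fields supported near the puncture, with $H$ (the smooth mean curvature, extended arbitrarily over $0$) square integrable.

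\emph{Step 1 (finiteness of mass).} Set $a(t)=\mu_g\bigl(\{x\in D_r\backslash\{0\}:|f(x)|<t\}\bigr)=\mu\bigl(\Sigma_r\cap B_t(0)\bigr)$. If $\mu_g(D_r\backslash\{0\})=\infty$, then $a(t)=\infty$ for every $t>0$: by continuity of $f$ at $0$ there is $\delta>0$ with $f(D_\delta)\subset B_t(0)$, so $a(t)\ge\mu_g(D_\delta\backslash\{0\})$, while $\mu_g(D_r\backslash\{0\})=\mu_g(D_\delta\backslash\{0\})+\mu_g(D_r\backslash\overline{D_\delta})$ and the second summand is finite because its domain has compact closure in $D\backslash\{0\}$, where $g$ is smooth; hence $\mu_g(D_\delta\backslash\{0\})=\infty$. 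But hypothesis~(2) forces $\mu(\Sigma\cap B_{\epsilon_k}(0))<\infty$ for each $k$, and $a(\epsilon_k)\le\mu(\Sigma\cap B_{\epsilon_k}(0))$ since $D_r\subset D$ --- a contradiction. Therefore $\mu(V_r)=\mu_g(D_r\backslash\{0\})<\infty$ for every $r\in(0,1)$; in particular $H\in L^1(\mu)\cap L^2(\mu)$ on $\Sigma_r$ by hypothesis~(1).

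\emph{Step 2 (extension of the first variation across $0$).} Fix $\gamma\in C^\infty([0,\infty);[0,1])$ with $\gamma\equiv 0$ on $[0,1]$, $\gamma\equiv 1$ on $[2,\infty)$, $|\gamma'|\le 2$, and put $\gamma_\sigma(y)=\gamma(|y|/\sigma)$. It costs nothing in the sequel to take $r$ with $0\notin f(\p D_r)$; writing $\rho_0=\tfrac12\operatorname{dist}(0,f(\p D_r))>0$, for any $X\in C^1_c(B_{\rho_0}(0),\R^n)$ the pulled-back field $(\gamma_\sigma X)\circ f$ is compactly supported in $D_r\backslash\{0\}$, so the classical first variation identity for the smooth immersion applies. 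Expanding $\mathrm{div}_\Sigma(\gamma_\sigma X)=\gamma_\sigma\,\mathrm{div}_\Sigma X+\langle\nabla^\Sigma\gamma_\sigma,X\rangle$ gives
\[
\int\gamma_\sigma\,\mathrm{div}_\Sigma X\,d\mu=-\int\langle H,\gamma_\sigma X\rangle\,d\mu-\int\langle\nabla^\Sigma\gamma_\sigma,X\rangle\,d\mu .
\]
Since $|\nabla^\Sigma\gamma_\sigma|\le 2/\sigma$ and is supported in $B_{2\sigma}(0)\backslash B_\sigma(0)$, the last integral is at most $\tfrac{2}{\sigma}\|X\|_\infty\,\mu(\Sigma\cap B_{2\sigma}(0))$; choosing $\sigma=\epsilon_k/2$ this becomes $\le\tfrac{4}{\epsilon_k}\|X\|_\infty\,\mu(\Sigma\cap B_{\epsilon_k}(0))\to 0$ by hypothesis~(2). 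As $\epsilon_k\searrow 0$ we have $\gamma_{\epsilon_k/2}\nearrow 1$ pointwise off $0$, so, using $\mathrm{div}_\Sigma X\in L^\infty(\mu)$, $\mu(V_r)<\infty$ and $\langle H,X\rangle\in L^1(\mu)$, dominated convergence yields $\int_{\Sigma_r}\mathrm{div}_\Sigma X\,d\mu=-\int_{\Sigma_r}\langle H,X\rangle\,d\mu$ for all such $X$. Hence $V_r$ is a rectifiable integral $2$-varifold whose generalized mean curvature is $H\in L^2(\mu)$.

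\emph{Main obstacle.} The only delicate point is the puncture: one must rule out a boundary-type (singular) contribution of the first variation concentrated at $0$. This is exactly what hypothesis~(2) provides --- it says the $1$-dimensional density of $\Sigma$ at $0$ vanishes along $\{\epsilon_k\}$ --- and it is precisely calibrated so that the cutoff error $\tfrac1\sigma\,\mu(\Sigma\cap B_{2\sigma})$ can be sent to zero; finiteness of mass, local boundedness of $\theta$ and rectifiability come for free from the continuity of $f$ at $0$ and its smoothness elsewhere.
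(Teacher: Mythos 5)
Your proof is correct and follows essentially the same route as the paper's: finiteness of mass from hypothesis (2) near the puncture plus compactness away from it, integrality via the multiplicity function, and the first variation identity obtained with the ambient cutoff $\gamma(|y|/\sigma)$ pulled back through $f$, with the cutoff error controlled by $\frac{1}{\epsilon_k}\,\mu(\Sigma\cap B_{2\epsilon_k}(0))\to 0$. The only (cosmetic) differences are that you pass to the limit by dominated convergence rather than estimating the differences directly, and that you restrict to test fields supported away from $f(\partial D_r)$ --- which is in fact slightly more careful than the paper's statement for arbitrary $C^1$ fields, and is exactly what the later application to Simon's monotonicity formula at the origin requires.
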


\proof We first show that $\Sigma_r$ has finite total measure. By assumption (2), there is an integer $k_0$ such that $\mu(\Sigma_r\cap B_{\epsilon_{k_0}}(0))\leq\mu(\Sigma\cap B_{\epsilon_{k_0}}(0)) < 1$. The set $\Sigma_r\backslash B_{\epsilon_{k_0}}(0)$ is compact and $g$ is bounded there, so it has finite measure. Hence $\mu(\Sigma_r)<\infty$.

For any $y\in\Sigma_r$, let $\theta(y)={\mathcal H}^0(f^{-1}(y))$ where ${\mathcal H}^0$ is the 0-dimensional Hausdorff measure. By the general area formula, see 8.4 in \cite{Simon},
$$
\int_{\Sigma_r\backslash B_{\epsilon_k}(0)} \theta(y)d\mathcal H^2(y)=\mu(\Sigma_r\backslash B_{\epsilon_k}(0))\leq \mu(\Sigma_r)<\infty.
$$
Letting $k\to\infty$, we see $\theta(y)$ is integrable on $\Sigma_r\backslash \{0\}$.  Further, being the image of a smooth immersion, $\Sigma_r\backslash\{0\}$ is a countable union of embedded surfaces, hence, it is countably 2-rectifiable (Lemma 11.1, \cite{Simon}). It follows that $(\Sigma,\theta)$ is a rectifiable integral 2-varifold (p.77, \cite{Simon}).

Let $\eta$ be a cut-off function with values between 0 and 1  with $|\eta'|\leq C$, and it equals 1 on $[1,+\infty)$ and 0 on $(-\infty,\frac{1}{2})$. Then
$$
\eta_\epsilon(x)=\eta\left(\frac{|f(x)|}{2\epsilon}\right),\,\,\,\,\,x\in D_r
$$
 is 0 when $f(x)\in \Sigma_r\cap B_\epsilon(0)$ and equals 1 when
$f(x)\in\Sigma_r\backslash B_{2\epsilon}(0)$; $\eta_\epsilon$ is continuous on $D_r$ since $f$ is continuous and $\eta_\epsilon$ is locally Lipschitz on $D_r\backslash\{0\}$. Moreover, as $g$ is the induced metric on $\Sigma_r\backslash\{0\}$ by the immersion $f|_{D_r\backslash\{0\}}$, we have, by Kato's inequality, on $D_r\backslash\{0\}$ the estimate
\begin{equation}\label{cutoff}
|\nabla_g\eta_\epsilon|\leq \frac{C}{2\epsilon}\left|\nabla_g|f|\right|
\leq \frac{C}{2\epsilon}|\nabla_gf|=\frac{C}{2\epsilon}\sqrt{2}.
\end{equation}
For any $C^1$ vector field $X$ on ${\mathbb R}^n$, $\eta_{\epsilon_k}X$ is a $C^1$ vector field along $\Sigma_r$ as $\eta_{\epsilon_k}$ vanishes on $\Sigma_r\cap B_{\epsilon_k}(0)$. Then
\begin{equation}\label{X}
-\int_{\Sigma_r} H \cdot \eta_{\epsilon_k}X=\int_{\Sigma_r} \mbox{div}_{\Sigma_r}(\eta_{\epsilon_k}X)=
\int_{\Sigma_r} X\cdot \nabla_g\eta_{\epsilon_k}+\int_{\Sigma_r}\eta_{\epsilon_k} \mbox{div}_{\Sigma_r} X.
\end{equation}
Since $\Sigma_r$ is bounded, $X|_{\Sigma_r}$ is bounded in $C^1$. Then
$$
\left|\int_{\Sigma_r} H\cdot \eta_{\epsilon_k}X\right|\leq C \left(\int_{\Sigma_r \backslash\{0\}} |H|_g^2d\mu_g\right)^{1/2}\mu(\Sigma_r\cap B_{2\epsilon_k}(0))^{1/2}\rightarrow 0\,\,\,\,\mbox{as} \,\,k\to\infty
$$
by assumptions (1) and (2), and
$$
\left|\int_{\Sigma_r} X\cdot \nabla_g\eta_{\epsilon_k}\right| \leq \frac{C}{\epsilon_k}\mu(\Sigma_r\cap (B_{2\epsilon_k}(0))\rightarrow 0\,\,\,\,\mbox{as} \,\,k\to\infty
$$
by \eqref{cutoff} and assumption (2). Letting $k\rightarrow 0$ in \eqref{X}, we have
$$
-\int_{\Sigma_r} H \cdot X=\int_{\Sigma_r} \mbox{div}_{\Sigma_r} X.
$$
As $X$ is arbitrary, $H$ is the generalized mean curvature of $\Sigma_r$ (cf. \cite{Simon}) and by (1) it is in $L^2(\Sigma_r)$. \endproof

The following rigidity result for the Dirichlet problem of harmonic functions
on a punctured disk will be used in the proof of Theorem \ref{t1} to eliminate the conformal types of annuli.

\begin{lem}\label{l2}
Let $\phi$ be a harmonic function on $(D_r\backslash\{0\},g)$ where $g$ is as in Theorem \ref{t1}.
If
$$
\phi|_{\partial D_r}=0, \s
\|\phi\|_{L^\infty}<\infty,\mbox{ and } \int_{D_r\backslash\{0\}}|\nabla_g\phi|^2d\mu_g<\infty,
$$
then $\phi=0$.
\end{lem}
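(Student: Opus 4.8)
The plan is to show that a bounded harmonic function with finite Dirichlet energy on the punctured disk $(D_r\backslash\{0\},g)$ and vanishing boundary values must extend harmonically across the puncture, and then invoke the maximum principle. First I would observe that, because $\phi$ is harmonic for the metric $g$ and $g$ is conformal to the Euclidean metric on $D_r\backslash\{0\}$ (any smooth metric on a surface is locally conformally flat, and harmonicity of functions on surfaces is a conformally invariant notion), $\phi$ is in fact a harmonic function in the ordinary Euclidean sense on the punctured disk $D_r\backslash\{0\}$. The crucial point is then that $\{0\}$ has zero capacity in dimension two: a bounded harmonic function on a punctured planar disk has a removable singularity at the puncture. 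Hence $\phi$ extends to a (Euclidean, equivalently $g$-) harmonic function on all of $D_r$.

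Next I would apply the maximum principle to the extended function $\phi$ on $D_r$. Since $\phi$ is continuous up to $\partial D_r$ with $\phi|_{\partial D_r}=0$, and harmonic on $D_r$, the maximum principle forces $\phi\equiv 0$ on $D_r$, hence on $D_r\backslash\{0\}$. If one prefers to avoid quoting the removable-singularity theorem for bounded harmonic functions, an alternative is to argue directly by energy methods: multiply the equation $\Delta_g\phi=0$ by $\phi\eta_\delta^2$, where $\eta_\delta$ is a logarithmic cutoff vanishing near $0$ and equal to $1$ outside $B_\delta(0)$ with $\|\nabla_g\eta_\delta\|_{L^2}^2\to 0$ as $\delta\to 0$ (this uses that $2$-capacity of a point vanishes, together with the fact that $g$ is comparable to the Euclidean metric on $B_\delta(0)\setminus\{0\}$ for the relevant range). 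Integrating by parts and using $\phi|_{\partial D_r}=0$ together with the boundedness of $\phi$ gives
\[
\int_{D_r\backslash\{0\}}|\nabla_g\phi|^2\eta_\delta^2\,d\mu_g
= -2\int_{D_r\backslash\{0\}}\phi\,\eta_\delta\,\langle\nabla_g\phi,\nabla_g\eta_\delta\rangle\,d\mu_g,
\]
and by Cauchy--Schwarz the right-hand side is bounded by $2\|\phi\|_{L^\infty}\,\big(\int|\nabla_g\phi|^2\eta_\delta^2\big)^{1/2}\big(\int|\nabla_g\eta_\delta|^2\big)^{1/2}$, which tends to $0$ as $\delta\to 0$ because $\int_{D_r\backslash\{0\}}|\nabla_g\phi|^2\,d\mu_g<\infty$ and $\int|\nabla_g\eta_\delta|^2\to 0$. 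Letting $\delta\to 0$ yields $\int_{D_r\backslash\{0\}}|\nabla_g\phi|^2\,d\mu_g=0$, so $\phi$ is constant, and the boundary condition forces the constant to be $0$.

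The main obstacle, and the only place requiring care, is the behavior of the metric $g$ near the puncture: $g$ is the pullback by $f$ of the Euclidean metric, and a priori we have no two-sided comparison between $g$ and $g_0$ near $0$. What saves the argument is that harmonicity on a surface depends only on the conformal class, so the Euclidean structure on $D_r\backslash\{0\}$ is all that matters for extending $\phi$, while for the energy-cutoff argument one needs $\int|\nabla_g\eta_\delta|^2\,d\mu_g\to 0$, which is again conformally invariant and hence reduces to the Euclidean estimate $\int_{B_\delta\setminus\{0\}}|\nabla_{g_0}\eta_\delta|^2\,dx\to 0$ for logarithmic cutoffs. Thus neither route actually uses a size estimate on $g$ near $0$, only its conformal class, and the conclusion $\phi\equiv 0$ follows.
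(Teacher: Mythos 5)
Your argument has a genuine gap, and in fact it is circular. The metric $g=f^*g_{\R^n}$ is a smooth metric on $D_r\backslash\{0\}$, but it is \emph{not} conformal to $g_0$ in the given coordinates, so $\phi$ is not Euclidean-harmonic on $D_r\backslash\{0\}$. Isothermal coordinates exist only locally; globally, $(D_r\backslash\{0\},g)$ could a priori be conformally equivalent to an annulus $D\backslash\overline{D}_{r_0}$, and that is exactly the case this lemma is designed to kill in Case (i) of the proof of Theorem \ref{t1}: were the conformal type an annulus, the pullback of $\log|z|/\log r_0$ would be a bounded, finite-energy, $g$-harmonic function vanishing on $\partial D_r$ but not identically zero. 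Your first route (``the puncture has zero capacity, so the singularity is removable'') and the key requirement of your second route ($\int|\nabla_g\eta_\delta|^2\,d\mu_g\to0$ for cutoffs collapsing onto the puncture) are each equivalent to the assertion that the end at $0$ is parabolic --- i.e., to the very conformal-type statement the lemma is a step toward proving. Conformal invariance of the Dirichlet integral does not let you ``reduce to the Euclidean estimate'' unless you already have a global conformal equivalence with the punctured Euclidean disk, and your own caveat that there is no two-sided comparison between $g$ and $g_0$ near $0$ is precisely why the logarithmic-cutoff capacity bound cannot be quoted. Tellingly, your proof never invokes hypotheses (1) and (2) of Theorem \ref{t1}, yet the lemma is exactly the tool by which those hypotheses exclude the annulus type, so any correct proof must use them.

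The paper closes this gap by working extrinsically. The cutoff is $\eta_\epsilon=\eta\bigl(|f(x)|/2\epsilon\bigr)$, built from the ambient distance in $\R^n$ rather than from the domain coordinate. Kato's inequality gives $|\nabla_g\eta_\epsilon|\le C/\epsilon$, and Lemma \ref{1} together with Simon's monotonicity formula --- this is where the $L^2$ bound on $H$ and the decay $\mu(\Sigma\cap B_{\epsilon_k})=o(\epsilon_k)$ enter --- gives $\mu(\Sigma_r\cap B_{2\epsilon})\le C\epsilon^2$, hence a uniform bound on $\int|\nabla_g\eta_\epsilon|^2\,d\mu_g$. That bound, combined with the absolute continuity of the finite Dirichlet integral of $\phi$ over the shrinking regions $f^{-1}(B_{2\epsilon}\backslash B_\epsilon)\cap D_r$ (whose $\mu_g$-measure tends to $0$), makes the cross term vanish in the limit. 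Your Caccioppoli identity is the right skeleton, but the substance of the proof is producing a cutoff whose $g$-energy is actually controllable, and that control comes from the geometry of the immersion, not from the topology of the domain.
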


\proof Since $\eta_\epsilon$ is 0 in $B_{\epsilon}(0)$,
$\eta_\epsilon(f)$ is 0 on $f^{-1}(B_\epsilon(0))\cap D_r$. Then
\begin{eqnarray*}
\left|\dint_{ f^{-1}(B_{2\epsilon}(0))\cap D_r}\nabla_g\phi \nabla_g\left(\eta_\epsilon(f) \phi \right)d\mu_g\right|
&=&
\left|\dint_{f^{-1}(B_{2\epsilon}\backslash B_\epsilon(0))\cap D_r}\nabla_g\phi\nabla_g\left(\eta_\epsilon(f) \phi \right)d\mu_g\right|\\
&=&\left|\dint_{
f^{-1}(B_{2\epsilon}\backslash B_\epsilon(0))\cap D_r}\left(\phi \nabla_g\phi\nabla_g\eta_\epsilon(f)
+\eta_\epsilon(f)|\nabla_g\phi|^2\right)d\mu_g\right|\\[\mv]
&\leq&\|\phi\|_{L^\infty}\left(\dint_{
f^{-1}(B_{2\epsilon}\backslash B_\epsilon(0))\cap D_r}|\nabla_g\phi|^2d\mu_g\right)^{\frac{1}{2}}\left(\frac{\mu(\Sigma_r\cap B_{2\epsilon}(0))}{\epsilon^2}\right)^{\frac{1}{2}}\\[\mv]
&&+\dint_{
f^{-1}(B_{2\epsilon}\backslash B_\epsilon(0))\cap D_r}|\nabla_g\phi|^2d\mu_g,
\end{eqnarray*}
where we used the following identity
$$\mu_g(f^{-1}(B_{2\epsilon}\backslash B_\epsilon(0))\cap D_r)=
\mu(\Sigma_r\cap (B_{2\epsilon}\backslash B_\epsilon(0))).$$
By Lemma \ref{1}, we can apply Simon's monotonicity formula for surfaces with square integrable mean curvature \cite{Simon2}, see also \cite{KSc}, to assert
$$\frac{\mu(\Sigma_r\cap B_{2\epsilon}(0))}{\epsilon^2}<C.$$
Since the Dirichlet energy of $\phi$ is finite over $D_r\backslash\{0\}$ and $\mu_g(f^{-1}(B_{2\epsilon}\backslash B_\epsilon(0))\cap D_r)\rightarrow 0$
as $\epsilon\to 0$, it follows from the
the continuity of integration that
$$\lim_{\epsilon\rightarrow 0}
\int_{f^{-1}(B_{2\epsilon}\backslash B_\epsilon(0))\cap D_r}|\nabla_g\phi|^2d\mu_g=0,$$
which in turn implies
$$\lim_{\epsilon\rightarrow 0}\left|\int_{f^{-1}(B_{2\epsilon}(0))\cap D_r}\nabla_g\phi\nabla_g\left(
\eta_\epsilon(f) \phi \right)d\mu_g\right|=0.$$
Since $\eta_\epsilon(f)\phi$ is smooth on $D_r$ and
is 0 in a neighborhood of $0$, by the harmonicity of $\phi$,
$$
\int_{D_r} \nabla_g\phi\nabla_g\left(\eta_\epsilon(f)\phi\right)=0.
$$
Then we get
\begin{eqnarray*}
\int_{D_r}|\nabla_g\phi|^2d\mu_g&=&\lim_{\epsilon\to 0}\int_{D_r\backslash f^{-1}(B_{2\epsilon}(0))}|\nabla_g\phi|^2d\mu_g\\
&=&\lim_{\epsilon\to 0}\int_{D_r\backslash f^{-1}(B_{2\epsilon}(0))}\nabla_g\phi\nabla_g\left(\eta_\epsilon(f)\phi \right) d\mu_g \\
&=&\lim_{\epsilon\to 0} \int_{D_r}\nabla_g\phi\nabla_g\left(\eta_\epsilon(f)\phi \right) d\mu_g
-\lim_{\epsilon\to 0}\int_{ f^{-1}(B_{2\epsilon}(0))\cap D_r}\nabla_g\phi\nabla_g\left(\eta_\epsilon(f)\phi \right) d\mu_g\\
&=&0
\end{eqnarray*}
Therefore $\phi$ must be a constant and identically 0 as it vanishes on $\p D_r$. \endproof

\noindent {\it Proof of Theorem \ref{t1}}:  Since $g$ is positive definite on $D\backslash\{0\}$, by the uniformization theorem for Riemann surfaces, the punctured Riemannian disk $(D\backslash\{0\},g)$ is conformally equivalent to one and only one of the three: (i) a finite annulus $(D\backslash \overline D_{r_0},g_0)$ for some $r_0\in(0,1)$, (ii) the punctured plane $({\mathbb C}\backslash\{0\},g_0)$, (iii) the punctured disk $(D\backslash\{0\},g_0)$. Here $g_0$ denotes the Euclidean metric. Now we fix a positive number $r<1$.

In Case (i), $(D_r\backslash\{0\},g)$ is conformal to $(D\backslash\overline{D}_{r_0})$ for some $r_0\in(0,1)$. 
There exists a nonconstant bounded harmonic function $h$ on $(D\backslash\overline{D}_{r_0},g_0)$ with finite energy, which vanishes on either the inner circle $\partial D_{r_0}$ or the outer circle $\partial D$ but not both. Note that both harmonicity and the energy of $h$ are invariant under conformal diffeomorphisms of the 2-dimensional domain. Moreover, any conformal diffeomorphism $\psi$ between $(D_r\backslash\{0\},g)$ and $(D\backslash \overline{D}_{r_0},g_0)$ either maps $\partial D_r\to \partial D$ or $\partial D_r\to \partial D_{r_0}$. Without loss of any generality we may assume that $h\circ\psi$ equals 0 on $\partial D_r$. However, Lemma \ref{l2} asserts $h\circ\psi$ must be 0. This shows that Case (i) cannot happen.


In Case (ii),  there exists a conformal diffeomorphism $\varphi: (D\backslash\{0\},g)\to(\mathbb C \backslash\{0\},g_0)$.
 Then $\varphi(D_r\backslash\{0\})$ stays inside or outside the embedded closed curve $\varphi(\partial D_r)$ in $\C$. In the former case, $(D_r\backslash\{0\},g)$ is conformally diffeomorphic to $(D\backslash\{0\},g_0)$ by the Riemann mapping theorem since $\partial D_r$ bounds a simply connected domain in $\C$ by the Jordan curve theorem. In the latter case, by using an inversion $\frac{z-p}{|z-p|^2}$ for some point $p$ in the interior of the bounded domain enclosed by $\partial D_r$, we see that $(D_r\backslash\{0\},g)$ is conformally equivalent to a bounded simply connected domain punctured once, which is conformal to $(D\backslash\{0\},g_0)$.


In Case (iii), $(D_r\backslash\{0\},g)$ is conformally equivalent to a simply connected domain in $D$ with one puncture, hence conformal to $(D\backslash\{0\},g_0)$.
\endproof

The decay rate (2) in Theorem \ref{t1} on the area of $\Sigma$ inside small exterior balls  can be achieved under an assumption on length and this leads to

\begin{cor}
Let $f: D\rightarrow\R^n$ be a continuous map with $f(0)=0$ and $f|_{D\backslash\{0\}}$ is a smooth immersion.  Assume
$\int_{\Sigma\backslash \{0\}}|H|^2_{g}d\mu_{g}<\infty$.  If
$\mathcal{H}^{1}(\Sigma\cap \partial B_{\epsilon}(0))\rightarrow 0$ as $\epsilon\rightarrow
0$, where $\mathcal{H}^1$ is the 1-dimensional Hausdorff measure,
then the conclusion of Theorem \ref{t1} holds.
\end{cor}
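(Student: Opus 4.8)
The plan is to show that the hypothesis $\mathcal{H}^1(\Sigma\cap\partial B_\epsilon(0))\to 0$ forces the area decay condition (2) of Theorem \ref{t1}, after which the corollary follows immediately by invoking that theorem. The essential tool is the coarea formula applied to the distance function $\rho(y)=|y|$ on $\mathbb{R}^n$ restricted to $\Sigma$: since $|\nabla_\Sigma\rho|\leq 1$, one has
\begin{equation}\label{coarea}
\mu(\Sigma\cap B_\epsilon(0))=\int_{\Sigma\cap B_\epsilon(0)}1\,d\mu_g\geq\int_{\Sigma\cap B_\epsilon(0)}|\nabla_\Sigma\rho|\,d\mu_g=\int_0^\epsilon\mathcal{H}^1(\Sigma\cap\partial B_s(0))\,ds.
\end{equation}
This inequality goes the wrong way for a direct comparison, so instead I would use the standard monotonicity-type argument: writing $m(\epsilon)=\mu(\Sigma\cap B_\epsilon(0))$, the coarea formula gives $m'(\epsilon)=\int_{\Sigma\cap\partial B_\epsilon}|\nabla_\Sigma\rho|^{-1}\,d\mathcal{H}^1\geq\mathcal{H}^1(\Sigma\cap\partial B_\epsilon(0))$ for a.e.\ $\epsilon$, which still does not control $m$ from above by $\mathcal{H}^1$ alone.

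The cleaner route is the following. Fix $\delta>0$ and, using the hypothesis, choose $\epsilon_0$ so that $\mathcal{H}^1(\Sigma\cap\partial B_s(0))<\delta$ for all $s<\epsilon_0$. For each such $s$, the slice $\Sigma\cap\partial B_s(0)$ is (for a.e.\ $s$, by Sard's theorem applied to $\rho\circ f$ on $D\backslash\{0\}$) a finite union of smooth curves, and the region $\Sigma\cap(B_{s}(0)\backslash B_{s/2}(0))$ is a graph-like annular piece whose area is controlled by its boundary length times its radial extent; more precisely, applying \eqref{coarea} on the dyadic shell $B_\epsilon\backslash B_{\epsilon/2}$ together with the bound on slice lengths gives
\begin{equation}\label{shell}
\mu\bigl(\Sigma\cap(B_\epsilon(0)\backslash B_{\epsilon/2}(0))\bigr)=\int_{\epsilon/2}^\epsilon\Bigl(\int_{\Sigma\cap\partial B_s}|\nabla_\Sigma\rho|^{-1}\,d\mathcal{H}^1\Bigr)ds.
\end{equation}
Here I would invoke Lemma \ref{1}, which already establishes that $\Sigma_r$ is a rectifiable integral $2$-varifold with $L^2$ generalized mean curvature, and the resulting Simon monotonicity formula (as used in the proof of Lemma \ref{l2}) which yields $m(\epsilon)/\epsilon^2\leq C$; combined with $m(\epsilon)\geq\int_0^\epsilon\mathcal{H}^1(\Sigma\cap\partial B_s)\,ds$ this gives a two-sided grip. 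The decisive observation is that if $\mathcal{H}^1(\Sigma\cap\partial B_\epsilon(0))\to 0$, then for $\epsilon<\epsilon_0(\delta)$ one has by \eqref{coarea} that $m(\epsilon)\leq$ (something summable in dyadic shells) — but more directly, the isoperimetric inequality on varifolds with bounded mean curvature (Michael–Simon) applied to $\Sigma\cap B_\epsilon$ bounds $m(\epsilon)^{1/2}$ by $C\,\mathcal{H}^1(\partial(\Sigma\cap B_\epsilon))+C\|H\|_{L^2}m(\epsilon)^{1/2}$; absorbing the last term for small $\epsilon$ and noting $\partial(\Sigma\cap B_\epsilon)\subseteq\Sigma\cap\partial B_\epsilon(0)$ yields $m(\epsilon)^{1/2}\leq C\,\mathcal{H}^1(\Sigma\cap\partial B_\epsilon(0))$, hence $m(\epsilon)=o(\epsilon^2)=o(\epsilon)$.

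Thus choosing any sequence $\epsilon_k\searrow 0$, condition (2) of Theorem \ref{t1} is verified (indeed with the stronger rate $o(\epsilon^2)$), and the conclusion of Theorem \ref{t1} holds verbatim. The main obstacle is making rigorous the application of the Michael–Simon isoperimetric inequality to the varifold $\Sigma\cap B_\epsilon(0)$ with its boundary: one must check that the relative boundary in the varifold sense is controlled by the slice $\Sigma\cap\partial B_\epsilon(0)$ for a.e.\ $\epsilon$ (using the coarea/slicing theory for rectifiable varifolds, e.g.\ Chapter 4 of \cite{Simon}), and that the $\|H\|_{L^2(\Sigma\cap B_\epsilon)}$ term genuinely tends to zero so that it can be absorbed — this last point is immediate from assumption (1) of Theorem \ref{t1} together with absolute continuity of the integral. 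Everything else is a routine packaging of the coarea formula and the monotonicity formula already established in Lemma \ref{1}.
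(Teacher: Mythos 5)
Your overall strategy --- verify hypothesis (2) of Theorem \ref{t1} and then quote that theorem --- is the right one and is exactly what the paper does. But the step you designate as ``decisive'' does not deliver the required decay, and the auxiliary tools you lean on are not available at this stage. First, the Michael--Simon route gives $\mu(\Sigma\cap B_\epsilon)^{1/2}\leq C\,\mathcal{H}^1(\Sigma\cap\partial B_\epsilon(0))$ after absorbing the mean curvature term, hence $\mu(\Sigma\cap B_\epsilon)\leq C\,\mathcal{H}^1(\Sigma\cap\partial B_\epsilon(0))^2$. The hypothesis only says $\mathcal{H}^1(\Sigma\cap\partial B_\epsilon(0))\to 0$, with no rate, so this yields $\mu(\Sigma\cap B_\epsilon)=o(1)$ and nothing more; the asserted conclusion ``$m(\epsilon)=o(\epsilon^2)=o(\epsilon)$'' is a non sequitur. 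Second, the appeal to Lemma \ref{1} and to Simon's monotonicity formula ($m(\epsilon)/\epsilon^2\leq C$) is circular: Lemma \ref{1} is proved \emph{under} assumption (2) of Theorem \ref{t1} (its proof needs $\mu(\Sigma\cap B_{2\epsilon_k})/\epsilon_k\to 0$ to kill the cut-off term $\frac{C}{\epsilon_k}\mu(\Sigma_r\cap B_{2\epsilon_k})$), and assumption (2) is precisely what the corollary must establish. Had the monotonicity bound been legitimately available, you could have multiplied it against your isoperimetric bound to get $m(\epsilon)\leq C\epsilon\,\mathcal{H}^1(\Sigma\cap\partial B_\epsilon(0))=o(\epsilon)$, but it is not.

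The missing idea is to keep the factor of $\epsilon$ that the isoperimetric inequality throws away, and this is what the paper's proof does. Integrate the pointwise identity $\mathrm{div}_\Sigma f^T=2+f\cdot H$ over the smooth compact annular pieces $\Sigma_r\cap(B_{\epsilon}(0)\backslash B_{\epsilon_j}(0))$ (for regular values $\epsilon,\epsilon_j$ of $|f|$, so only the classical divergence theorem is needed and no varifold structure at the puncture is presupposed). The boundary term over the outer circle is bounded by $|f|\cdot\mathcal{H}^1(\Gamma_\epsilon)\leq\epsilon\,\mathcal{H}^1(\Sigma\cap\partial B_\epsilon(0))$ --- note the extra $\epsilon$ from $|f^T\cdot\nu|\leq|f|\leq\epsilon$ --- while the inner boundary term is nonpositive (or tends to $0$ with $\epsilon_j$ by the hypothesis). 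After Cauchy--Schwarz and absorption one gets $\mu(\Sigma_r\cap B_\epsilon(0))\leq\epsilon^2\int_{\Sigma\cap B_\epsilon}|H|^2+\epsilon\,\mathcal{H}^1(\Sigma\cap\partial B_\epsilon(0))$, and dividing by $\epsilon$ gives exactly condition (2). Your write-up never produces this factor of $\epsilon$ in front of the boundary length, which is the whole content of the corollary.
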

\begin{proof} It suffices to show (2) in Theorem \ref{t1} holds on $\Sigma_r$ for any fixed $r\in(0,1)$. The position vector $f$ in $\R^n$ satisfies 
$$
\mbox{div}_\Sigma\, f =2
$$
on $D\backslash\{0\}$ and
$$
\mbox{div}_\Sigma\, f = \mbox{div}_\Sigma\, f^T - f\cdot H
$$
where $f^T$ denotes the tangential part of $f$ and $\mbox{div}_\Sigma$ is the divergence along $\Sigma$. It follows
\begin{equation}\label{Gulliver}
\mbox{div}_\Sigma\, f^T =  2+ f\cdot H.
\end{equation}


Let $\rho(x)$ be the distance from $x$ to $0$ in $\R^n$. For $\rho>0$, $\mu(\Sigma\cap \partial B_\rho(0))$ depends on $\rho$ continuously. By Sard's theorem, the regular values of the restriction of $\rho$ to $\Sigma$ are dense, we may assume that $\epsilon_k>0$ and $\delta>0$ are regular values of $\rho|_\Sigma$ and $\epsilon_k\to 0$ as $k\to\infty$; therefore, for any $0<r<1$, the surface 
$$
\Sigma_{\delta,\epsilon_k}:=\Sigma_r \cap (B_{\delta}(0)\backslash B_{\epsilon_k}(0))
$$ 
has compact closure and smooth boundary $\Gamma_\delta \cup \Gamma_{\epsilon_k}$. Integrating \eqref{Gulliver} over $\Sigma_{\delta,\epsilon_k}$ leads to
\begin{eqnarray}\label{volume1}
2\mu(\Sdk)&\leq& 2\delta\int_{\Sdk}|H|d\mu_g
+\int_{\Sdk}\mbox{div}_\Sigma\, f^T d\mu_g \nonumber\\
&\leq&2\delta\mu(\Sdk)^{1/2}\left(\int_{\Sdk} |H|^2\right)^{1/2}
+\int_{\Gamma_\delta\cup\Gamma_{\epsilon_k}}f^T\cdot \nu\nonumber\\
&\leq&\mu(\Sdk)+\delta^2\int_{\Sdk}|H|^2+\int_{\Gamma_\delta\cup\Gamma_{\epsilon_k}}f^T\cdot \nu\nonumber
\end{eqnarray}
where $\nu$ is the unit outward normal to $\Sdk$ at its boundary $\Gamma_\delta\cup\Gamma_{\epsilon_k}$. Noting that $\nu$ is tangent to $\Sdk$,  $f^T\cdot\nu=f\cdot \nu\leq 0$ on $\Gamma_{\epsilon_k}$.
Letting $k\to\infty$ and using $|f^T\cdot\nu|\leq |f|$, we see
\begin{equation}\label{volume2}
\mu(\Sigma_r\cap B_\delta(0))\leq \delta^2\int_{\Sigma\cap B_\delta(0)\backslash\{0\}}|H|^2+\delta \mu(\Gamma_\delta) 
\end{equation}
where $\mu(\Gamma_\delta)=\mu_g(f^{-1}(\partial B_\delta(0)))$ is the total length of the preimage curve of $\Gamma_\delta$ in ${D}$ measured in $g$.
Replacing $\delta$ by $\epsilon_k$ in \eqref{volume1} and using the assumption that ${\mathcal H}^1(\Sigma_r\cap\partial B_{\epsilon_k}(0))\to 0$ as $k\to\infty$ and $|H|^2$ is integrable over $\Sigma\backslash\{0\}$, it is evident that $\mu(\Sigma_r\cap B_{\epsilon_k}(0))/\epsilon_k\to 0$ as $k\to\infty$.  \end{proof}

\section{$W^{2,2}$ Lipschitz immersions from a punctured disk}

In this section, we will be mainly concerned with immersions, that are not necessarily smooth, from punctured surfaces into $\R^n$.

\subsection{Conformal type of $W^{2,2}$-conformal immersions of $D\backslash\{0\}$}

Let $\omega$ be twice the standard K\"ahler
form of $\CP^n$ and denote $W^{1,2}_0(\C)$ the space of functions
$v \in L^2_{loc}(\C)$ with $\nabla v\in L^2(\C)$.
We set $J(\varphi)=|D\varphi\wedge D\varphi|$ to denote the
Jacobi of $\varphi$.

\begin{thm}[M\"uller-\v{S}ver\'ak \cite{M-S}] \label{MS}
Let $\varphi \in W^{1,2}_0(\C,\CP^n)$ satisfy
$$
\int_{\C} \varphi^{*}\omega =0 \quad \mbox{ and } \quad
\int_{\C} J(\varphi) \leq \gamma < 2\pi.
$$
Then there is a unique function $v \in W^{1,2}_0(\C)$
solving the equation $- \Delta v = \ast \varphi^{\ast} \omega$ in $\C$
with boundary condition $\lim_{z \to \infty} v(z) = 0$. Moreover
$$
\|v\|_{L^\infty(\C)} +\|\nabla v\|_{L^2(\C)}
\leq C(\gamma) \int_{\C} |\nabla \varphi|^2.
$$
\end{thm}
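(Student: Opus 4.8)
\medskip
\noindent The plan is to treat this as a Wente-type estimate obtained by compensated compactness: (a) show that $\ast\varphi^\ast\omega$ is a mean-zero element of the Hardy space $\mathcal H^1(\R^2)$ with $\|\ast\varphi^\ast\omega\|_{\mathcal H^1}\le C(\gamma)\int_\C|\nabla\varphi|^2$; (b) solve $-\Delta v=\ast\varphi^\ast\omega$ by the Newton potential and read off the $L^\infty$ and $\dot W^{1,2}$ bounds; (c) check the decay at infinity and the uniqueness. The substance is (a), which rests on the K\"ahler structure of $\CP^n$. Since $\CP^n$ is compact, one covers it by finitely many charts, on each of which the closed form $\omega$ is put in Darboux form $\sum_i dp_i\wedge dq_i$ with $p_i,q_i$ smooth (so bounded, and safe to compose with a $W^{1,2}$ map). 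Partitioning $\R^2$ into regions on each of which $\varphi$ essentially lands in one chart --- arrangeable using $\int_\C|\nabla\varphi|^2<\infty$ via a H\'elein-type covering --- one represents $\ast\varphi^\ast\omega$ there as a finite sum of Jacobian determinants $\{p_i\circ\varphi,q_i\circ\varphi\}$ of functions in $W^{1,2}\cap L^\infty$, which by the div-curl lemma of Coifman--Lions--Meyer--Semmes lies in $\mathcal H^1_{loc}(\R^2)$ with local norm $\le C\|\nabla\varphi\|_{L^2}^2$. (Equivalently: embed $\CP^n$ isometrically in some $\R^N$ and use that the pullback of a closed form by a $W^{1,2}$ map into a closed submanifold is in $\mathcal H^1_{loc}$ with norm controlled by the energy; or lift $\varphi$ to the Hopf sphere $S^{2n+1}$ in a Coulomb gauge and use $\varphi^\ast\omega=d(\operatorname{Im}\langle\psi,d\psi\rangle)$.) The two hypotheses are precisely what globalize this with a uniform constant: $\int_\C\varphi^\ast\omega=0$ is the mean-zero requirement for membership in $\mathcal H^1(\R^2)$ rather than merely $\mathcal H^1_{loc}$, and $\int_\C J(\varphi)=\int_\C|D\varphi\wedge D\varphi|<2\pi$ --- strictly below the $\omega$-area of a projective line $\CP^1\subset\CP^n$ --- rules out the energy concentration that would otherwise obstruct the patching, making the constant depend only on $\gamma$ (and blow up as $\gamma\uparrow 2\pi$).

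For (b), set $v=G\ast(\ast\varphi^\ast\omega)$ with $G(z)=-\tfrac1{2\pi}\log|z|$, so $-\Delta v=\ast\varphi^\ast\omega$. Then $\nabla^2 v$ is a combination of Riesz transforms of $\ast\varphi^\ast\omega$, hence again in $\mathcal H^1(\R^2)\subset L^1(\R^2)$; the endpoint Sobolev embedding $W^{1,1}(\R^2)\hookrightarrow L^2(\R^2)$ applied to $\nabla v$ gives $\|\nabla v\|_{L^2}\le C\|\nabla^2 v\|_{L^1}\le C\|\ast\varphi^\ast\omega\|_{\mathcal H^1}$, while $\mathcal H^1$--$BMO$ duality, using that $\log|z-\cdot|$ has $BMO$-norm independent of $z$, gives $|v(z)|\le C\|\ast\varphi^\ast\omega\|_{\mathcal H^1}$ for every $z$. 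Combined with (a), $\|v\|_{L^\infty(\C)}+\|\nabla v\|_{L^2(\C)}\le C(\gamma)\int_\C|\nabla\varphi|^2$. (Alternatively one invokes Wente's inequality directly on each Jacobian term.)

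For (c): since $\int_{\R^2}\ast\varphi^\ast\omega=\int_\C\varphi^\ast\omega=0$, for large $|z|$ we may write
$$
v(z)=-\frac1{2\pi}\int_{\R^2}\bigl(\log|z-y|-\log|z|\bigr)\,(\ast\varphi^\ast\omega)(y)\,dy ,
$$
and the integrand tends to $0$ pointwise in $y$ and is dominated, so $\lim_{z\to\infty}v(z)=0$. For uniqueness, if $v_1,v_2\in W^{1,2}_0(\C)$ both solve the equation and vanish at infinity, then $w=v_1-v_2$ is harmonic on all of $\C$ with $\nabla w\in L^2(\C)$; $\nabla w$ being harmonic (componentwise), the mean value property gives $|\nabla w(z_0)|\le(\pi R^2)^{-1/2}\|\nabla w\|_{L^2(\C)}\to 0$ as $R\to\infty$, so $w$ is constant, hence $w\equiv 0$.

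The expected main obstacle is step (a): extracting from a finite-energy $\CP^n$-valued map with $\int_\C\varphi^\ast\omega=0$ and $\int_\C J(\varphi)<2\pi$ a genuine mean-zero $\mathcal H^1(\R^2)$ Jacobian with norm controlled by the Dirichlet energy --- assembling the chartwise Darboux--Jacobian representations into one global representation, controlling the cutoff errors, and ruling out energy concentration, which is exactly where both hypotheses are needed and where the constant degenerates as $\gamma\uparrow 2\pi$. Steps (b) and (c) are then the standard Wente and potential-theory estimates.
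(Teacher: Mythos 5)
First, note that the paper does not prove Theorem \ref{MS}: it is imported verbatim from M\"uller--\v{S}ver\'ak \cite{M-S} and used as a black box (only Corollary \ref{L.infinity} is derived from it here, by reflection). So there is no internal proof to compare you against; I am judging your argument against the original. Your steps (b) and (c) are the standard and essentially correct Wente/CLMS machinery: second derivatives of the Newton potential are Riesz transforms of $\ast\varphi^*\omega$ and so stay in $\mathcal{H}^1\subset L^1$, the embedding $\dot W^{1,1}(\R^2)\hookrightarrow L^2$ gives the gradient bound, $\mathcal{H}^1$--$BMO$ duality against $\log|z-\cdot|$ gives the $L^\infty$ bound, and uniqueness follows from Liouville for entire harmonic functions with $L^2$ gradient. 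Your identification of step (a) as the crux is also correct.

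The problem is that step (a) is asserted rather than proved, and the specific mechanism you propose for it would not work. A partition of $\R^2$ into regions on which a $W^{1,2}$ map ``essentially lands in one Darboux chart'' is not available (H\'elein-type coverings localize energy on the domain, not the image), and even granting it, multiplying a Jacobian $\{p_i\circ\varphi,q_i\circ\varphi\}$ by a cutoff destroys the div--curl structure, so the resulting pieces are not in $\mathcal{H}^1$ with norm controlled by the energy; nor does $\mathcal{H}^1_{loc}$ membership together with $\int_{\C}\varphi^*\omega=0$ upgrade to a global $\mathcal{H}^1(\R^2)$ bound. The role you assign to the hypothesis $\int_{\C}J(\varphi)\le\gamma<2\pi$ (``ruling out concentration that obstructs the patching'') is also not how it is actually used: since $2\pi$ is the $\omega$-area of a line $\CP^1$, this bound together with the degree-zero condition $\int_{\C}\varphi^*\omega=0$ is what allows \cite{M-S} to produce a single \emph{global} representation of $\ast\varphi^*\omega$ as a finite sum of Jacobians of functions in $\dot W^{1,2}\cap L^\infty$ with norms controlled by $\int_{\C}|\nabla\varphi|^2$ (roughly, the essential image avoids a hyperplane after a unitary change of homogeneous coordinates, so one works in one affine chart where $\omega$ is exact with explicit bounded potentials), after which Wente/CLMS applies with no patching at all and with a constant degenerating only as $\gamma\uparrow 2\pi$. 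Your parenthetical alternatives (isometric embedding of $\CP^n$, Hopf lift in Coulomb gauge) point toward viable routes but are likewise not executed. As written, the key analytic content of the theorem --- the controlled global Jacobian/Hardy structure of $\ast\varphi^*\omega$ --- is restated as a goal, not established.
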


We include the following result in \cite{K-L} for completeness.
 
\begin{cor}\label{L.infinity} Let $\varphi \in W^{1,2}(D,\CP^n)$ satisfy
$$
\int_{D} J(\varphi) \leq \gamma < 2\pi.
$$
Then there is a  continuous function $v \in W^{1,2}_0(\C)\cap L^\infty(\C)$
solving the equation $- \Delta v = \ast \varphi^{\ast} \omega$ in $D$
and satisfying the estimates
$$
\|v\|_{L^\infty(\C)} + \|\nabla v\|_{L^2(\C)}
\leq C(\gamma) \int_D |\nabla \varphi|^2.
$$
\end{cor}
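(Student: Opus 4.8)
The plan is to reduce the statement to the Müller–Šverák theorem (Theorem \ref{MS}) by extending $\varphi$, defined on the unit disk $D$, to a map defined on all of $\C$ in such a way that the hypotheses of Theorem \ref{MS} are met: namely, finite Dirichlet energy, total $\varphi^*\omega$-integral equal to zero, and total Jacobian below the threshold $2\pi$. First I would extend $\varphi$ across $\partial D$ by an inversion-type reflection: define $\tilde\varphi$ on $\C\setminus D$ by $\tilde\varphi(z)=\varphi(z/|z|^2)$, which is a standard reflection that preserves the Sobolev class $W^{1,2}$ and whose energy over $\C\setminus\bar D$ equals the energy of $\varphi$ over $D$, so that $\tilde\varphi\in W^{1,2}(\C,\CP^n)$ with $\int_\C|\nabla\tilde\varphi|^2 = 2\int_D|\nabla\varphi|^2$. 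The inversion is orientation-reversing, hence the pulled-back form over the exterior piece carries the opposite sign of the one over $D$; consequently $\int_\C\tilde\varphi^*\omega = \int_D\varphi^*\omega - \int_D\varphi^*\omega = 0$, which is exactly the first hypothesis of Theorem \ref{MS}. For the Jacobian bound, $\int_\C J(\tilde\varphi) = 2\int_D J(\varphi)\le 2\gamma$, which need not be below $2\pi$; this is dealt with below by a rescaling/localization argument rather than by using the global extension directly.

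Here is the key point where one has to be a little careful, and I expect it to be the main obstacle. Theorem \ref{MS} requires the total Jacobian strictly below $2\pi$, but after reflection we only control it by $2\gamma$, and even $\gamma$ itself, while below $2\pi$, may be close to $2\pi$, so the doubling destroys the bound. To get around this, I would instead work on a smaller disk: by absolute continuity of the integral $\int_D J(\varphi)$, for every point $p\in\bar D$ there is a radius $\rho_p>0$ with $\int_{B_{\rho_p}(p)} J(\varphi) < \pi$, say; reflecting across $\partial B_{\rho_p}(p)$ then produces a map on $\C$ with total Jacobian below $2\pi$, and Theorem \ref{MS} applies on $B_{\rho_p}(p)$ to give a local solution $v_p$ of $-\Delta v_p = \ast\varphi^*\omega$ with the stated $L^\infty$ and $W^{1,2}$ bounds. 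The genuinely global statement — a single $v\in W^{1,2}_0(\C)\cap L^\infty(\C)\cap C^0$ solving $-\Delta v=\ast\varphi^*\omega$ on all of $D$ — is then obtained by the following device: the difference $v_p - v_q$ is harmonic on overlaps, so one patches them, or, more cleanly, one solves $-\Delta v=\ast\varphi^*\omega$ on $D$ directly (note $\ast\varphi^*\omega\in L^1(D)$) by Newtonian potential $v_0(z) = \frac{1}{2\pi}\int_D \log\frac{1}{|z-w|}\,(\ast\varphi^*\omega)(w)\,dw$ extended by the same integral formula to all of $\C$, which automatically lies in $W^{1,2}_0(\C)$ with $\lim_{z\to\infty}v_0(z)=0$ after subtracting the constant $\frac1{2\pi}(\log\,\cdot)(\int_D\ast\varphi^*\omega)$, or — since by the reflection argument $\int_D\ast\varphi^*\omega$ need not vanish — one uses the reflected datum whose integral does vanish.

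So the cleanest route is: form the reflected map $\tilde\varphi$ on $\C$ as above so that $\int_\C\tilde\varphi^*\omega=0$; localize via the absolute-continuity covering argument to produce, on each small ball, a solution with the quantitative bounds of Theorem \ref{MS}; and finally assemble a global $v$ by taking the Newtonian potential of $\ast\tilde\varphi^*\omega$ over $\C$, which is well-defined, tends to $0$ at infinity because the mass has zero total integral, and whose local regularity and estimates match those furnished by Theorem \ref{MS} on each ball of the cover — in particular $v$ is continuous (the Newtonian potential of an $L^1$ function with the local energy bound is continuous by Morrey-type estimates, or directly because on each small ball $v$ coincides up to a harmonic function with the Müller–Šverák solution, which is continuous). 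Summing the local $L^\infty$ estimates over a fixed finite subcover of $\bar D$ and using that $\int_{B}|\nabla\varphi|^2\le\int_D|\nabla\varphi|^2$ for each ball $B$ of the cover yields the claimed bound $\|v\|_{L^\infty(\C)}+\|\nabla v\|_{L^2(\C)}\le C(\gamma)\int_D|\nabla\varphi|^2$, with $C(\gamma)$ depending on $\gamma$ through the number of balls needed, which in turn is controlled once $\gamma<2\pi$. The only real subtlety is the interplay between the reflection (needed to kill the total integral so that the potential decays) and the Jacobian threshold (which forces localization); once these two are reconciled by the covering argument, the rest is routine elliptic theory.
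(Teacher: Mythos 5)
Your first paragraph is exactly the paper's proof: reflect $\varphi$ across $\partial D$ by $z\mapsto 1/\overline z$, observe that the reflected map $\varphi'$ lies in $W^{1,2}_0(\C,\CP^n)$ with $\int_\C\varphi'^*\omega=0$ and $\int_\C J(\varphi')=2\int_D J(\varphi)$, and invoke Theorem \ref{MS}; the paper stops there. Your worry that the doubled Jacobian $2\gamma$ may exceed the $2\pi$ threshold is a fair observation (the paper is silent on this point), but the detour you take to repair it does not close.

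The gap is in the assembly step. The global $L^\infty$ and $W^{1,2}$ bounds for the Newtonian potential of $\ast\tilde\varphi^*\omega$ are not ``routine elliptic theory'': in two dimensions the Newtonian potential of a general $L^1$ function is neither bounded nor of finite Dirichlet energy, and obtaining these bounds is precisely the content of the Wente/M\"uller--\v{S}ver\'ak compensated-compactness estimate, which requires the Jacobian structure of the datum. Your localization destroys that structure: $\rho_i\cdot\ast\varphi^*\omega$ is no longer (the Hodge dual of) a pulled-back $2$-form, so Theorem \ref{MS} does not control its potential; and the global potential $v$ differs from each local M\"uller--\v{S}ver\'ak solution $v_p$ by a harmonic function $v-v_p$ for which you have no a priori bound, so ``summing the local $L^\infty$ estimates over a finite subcover'' does not bound $v$. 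The correct patching --- which the paper carries out in the proof of Lemma \ref{key} for the annulus --- is to glue the \emph{solutions} with a partition of unity, set $V=\sum_i\rho_i v_i$, and correct the error $-\Delta V-\ast\varphi^*\omega=-2\sum_i\nabla\rho_i\cdot\nabla v_i-\sum_i v_i\Delta\rho_i$, which lies in $L^2$ thanks to the local bounds, by a Dirichlet problem handled with standard elliptic estimates. A second, smaller defect: the radii $\rho_p$ in your covering come from the absolute continuity of $\int J(\varphi)$, so the number of balls --- and hence your final constant --- depends on $\varphi$ itself and not only on $\gamma$, whereas the statement requires $C(\gamma)$.
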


\proof

Define the map $\varphi':\C \to \CP^{n-1}$ by
$$
\varphi'(z) =
\left\{\begin{array}{ll}
\varphi(z) & \mbox{ if }z \in D\\
\varphi(\frac{1}{\overline{z}}) & \mbox{ if }z \in \C \backslash \overline{D}
\end{array}\right.
$$
and taking the trace of $\varphi$ on $\partial D$ for $\varphi'$ there. Then $\varphi'\in W_0^{1,2}(\mathbb{C},\CP^{n-1})$ and
$$\int_{\mathbb{C}}{\varphi'}^* \omega =0,\s \int_{\mathbb{C}} J(\varphi') = 2 \int_D J(\varphi) .
$$
The desired result then follows from Theorem \ref{MS}. \endproof

\begin{lem}\label{key}
Let $0<a<1$ and $\varphi\in W^{1,2}(D\backslash \overline{{D}_a},\CP^n)$. There is a constant $\epsilon_0>0$ such that
if $\|\nabla\varphi\|_{L^2}<\epsilon_0$,
then we can find $v\in L^\infty(D\backslash \overline{D}_a)$ which solves
the equation $- \Delta v = \ast \varphi^{\ast} \omega$ in $D\backslash \overline{D}_a$
and satisfies the estimates
$$\| v\|_{L^\infty(D\backslash \overline{D}_a)}\leq C(a)\|\nabla \varphi\|_{L^2(D\backslash \overline{D}_a)}.$$
\end{lem}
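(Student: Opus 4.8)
Write $A=D\setminus\overline{D_a}$ and $h=\ast\varphi^\ast\omega$, so $|h|\le C|\nabla\varphi|^2$ a.e.\ and, once $\epsilon_0$ is chosen small, $\int_A|h|\le\int_A J(\varphi)\le C\|\nabla\varphi\|_{L^2(A)}^2<2\pi$. The obstruction to applying Corollary \ref{L.infinity} directly is that $A$ is not simply connected, so the reflection trick used there is unavailable. The plan is to localize: cover $A$ by finitely many simply connected pieces, solve the equation on each via Corollary \ref{L.infinity}, glue the local solutions with a partition of unity, and absorb the resulting commutator error by standard elliptic theory on the annulus. Throughout I would use that in dimension two both the Dirichlet integral $\int|\nabla\varphi|^2$ and the two–form $\varphi^\ast\omega$ are conformally invariant, and that the Laplacian transforms under a conformal map $\psi$ by $\Delta(u\circ\psi)=|\psi'|^2\,(\Delta u)\circ\psi$.

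Concretely, I would cover $A$ by two simply connected domains $U_1,U_2$ — for instance $A$ with a radial slit $\{t e^{i\alpha_j}:a<t<1\}$ removed, for two opposite angles $\alpha_1,\alpha_2$ — so that $U_1\cup U_2=A$. Each $U_j$ is simply connected and is not all of $\C$, hence by the Riemann mapping theorem admits a biholomorphism $\psi_j\colon D\to U_j$. By conformal invariance $\varphi_j:=\varphi\circ\psi_j\in W^{1,2}(D,\CP^n)$ with $\int_D J(\varphi_j)=\int_{U_j}J(\varphi)<2\pi$, so Corollary \ref{L.infinity} produces a continuous, bounded $\tilde v_j\in W^{1,2}_0(\C)\cap L^\infty(\C)$ with $-\Delta\tilde v_j=\ast\varphi_j^\ast\omega$ in $D$ and $\|\tilde v_j\|_{L^\infty}+\|\nabla\tilde v_j\|_{L^2}\le C\int_{U_j}|\nabla\varphi|^2$. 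Setting $v_j:=\tilde v_j\circ\psi_j^{-1}$ on $U_j$, the transformation rules above give $-\Delta v_j=h$ on $U_j$, while $\|v_j\|_{L^\infty(U_j)}\le C\int_{U_j}|\nabla\varphi|^2$ and, again by conformal invariance of the Dirichlet energy, $\|\nabla v_j\|_{L^2(U_j)}=\|\nabla\tilde v_j\|_{L^2(D)}\le C\int_{U_j}|\nabla\varphi|^2$.

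Next I would fix a smooth partition of unity $\chi_1+\chi_2\equiv1$ on $A$ with $\operatorname{supp}\chi_j\subset U_j$; one may take each $\chi_j$ to depend only on the angular variable, so that $|\nabla\chi_j|+|\Delta\chi_j|\le C(a)$ on $A$. Put $V:=\chi_1v_1+\chi_2v_2$ (extended by zero), so $V\in W^{1,2}(A)\cap L^\infty(A)$. A direct distributional computation gives $-\Delta V=h+E$ on $A$ with commutator term $E:=-\sum_j\big(2\,\nabla\chi_j\!\cdot\!\nabla v_j+v_j\,\Delta\chi_j\big)$, supported in $U_1\cup U_2$, and $\|E\|_{L^2(A)}\le C(a)\sum_j\big(\|\nabla v_j\|_{L^2(U_j)}+\|v_j\|_{L^\infty(U_j)}\big)\le C(a)\|\nabla\varphi\|_{L^2(A)}^2$. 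Since $A$ is a smooth bounded domain, the Dirichlet problem $-\Delta w=-E$ in $A$, $w=0$ on $\partial A$, has a unique solution $w\in W^{2,2}(A)\cap W^{1,2}_0(A)$ with $\|w\|_{W^{2,2}(A)}\le C(a)\|E\|_{L^2(A)}$; as $\dim A=2$, the Sobolev embedding $W^{2,2}(A)\hookrightarrow C^0(\overline A)$ gives $\|w\|_{L^\infty(A)}\le C(a)\|\nabla\varphi\|_{L^2(A)}^2$. Then $v:=V+w$ solves $-\Delta v=h=\ast\varphi^\ast\omega$ in $A$ and $\|v\|_{L^\infty(A)}\le C(a)\|\nabla\varphi\|_{L^2(A)}^2$. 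Finally, taking $\epsilon_0\le1$ and using $\|\nabla\varphi\|_{L^2(A)}<\epsilon_0$ yields $\|\nabla\varphi\|_{L^2(A)}^2\le\|\nabla\varphi\|_{L^2(A)}$, hence the asserted bound $\|v\|_{L^\infty(A)}\le C(a)\|\nabla\varphi\|_{L^2(A)}$.

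The conceptual point — and the only genuine obstacle — is the non–simple–connectedness of the annulus; everything else is bookkeeping. The part that will require the most care is the gluing step: verifying that the commutator $E$ really lies in $L^2(A)$ with a bound quadratic in $\|\nabla\varphi\|_{L^2(A)}$ (this is where the $L^\infty$ \emph{and} the $L^2$–gradient control on the local solutions $v_j$ coming from Corollary \ref{L.infinity} are both used), and that the correction $w$ is bounded on the \emph{closed} annulus, including near its two boundary circles — which is exactly the content of $W^{2,2}\hookrightarrow C^0$ in two dimensions. One should also note that the statement imposes no boundary condition on $v$, so it suffices to exhibit \emph{some} bounded distributional solution on the open annulus, which the construction does.
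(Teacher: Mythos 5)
Your argument is correct: the smallness of $\|\nabla\varphi\|_{L^2}$ gives $\int J(\varphi)<2\pi$ on each piece, Corollary \ref{L.infinity} transfers to the slit domains by conformal invariance of the Dirichlet energy, of $\varphi^*\omega$, and of the equation $-\Delta v=\ast\varphi^*\omega$, and the partition-of-unity gluing plus the $W^{2,2}\hookrightarrow C^0$ correction on the annulus closes the estimate (your explicit reduction from the quadratic bound $C\|\nabla\varphi\|_{L^2}^2$ to the stated linear one via $\epsilon_0\le 1$ is a point the paper glosses over). The overall skeleton — solve locally via Corollary \ref{L.infinity}, glue with a partition of unity, absorb the commutator $2\nabla\chi\cdot\nabla v+v\,\Delta\chi$ by a Dirichlet problem with $L^2$ right-hand side — is exactly the paper's, but your localization device differs. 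The paper does not use slit domains or the Riemann mapping theorem; instead it reflects $\varphi$ across both boundary circles to get $\widetilde\varphi\in W^{1,2}(D_{1/a}\backslash\overline{D}_{a^2})$, covers the compact set $\overline{D}\backslash D_a$ by finitely many round disks compactly contained in the enlarged annulus, and applies Corollary \ref{L.infinity} on each disk verbatim. The reflection route keeps everything on Euclidean disks so no conformal transplantation of the PDE is needed, at the cost of the doubling construction; your route avoids reflection entirely but must invoke the transformation law for $\Delta$ and for $\ast\varphi^*\omega$ under the biholomorphism, and work on domains with non-smooth (slit) boundary — harmless, as you note, since only interior information about $U_j$ is used and no boundary condition is imposed on $v$. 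Both yield the same conclusion with constants depending only on $a$.
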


\proof 
Let
$$
\widetilde\varphi(z)=\left\{\begin{array}{lll}
                 \varphi(\frac{1}{\overline{z}})&\mbox{if}\s1<|z|<\frac{1}{a}\\
                 \varphi(z)&\mbox{if}\s a\leq |z| \leq 1\\
                 \varphi(\frac{a^2}{\overline{z}})&\mbox{if}\s a^2<|z|<a.
\end{array}\right.
$$
It follows $\widetilde\varphi\in W^{1,2}(D_{\frac{1}{a}}\backslash \overline{D}_a, \CP^n)$ as
$$
\int_{D_\frac{1}{a}\backslash \overline{D}_{a^2}}|\nabla\widetilde\varphi|^2=3\int_{D\backslash \overline{D}_a} |\nabla \varphi|^2,\s
\int_{D_\frac{1}{a}\backslash \overline{D}_{a^2}}|\widetilde\varphi|^2\leq C(a)\int_{D\backslash \overline{D}_a} |\varphi|^2
$$
by the conformal invariance of the Dirichlet integral.
Cover the annulus $D_{\frac{1}{a}}\backslash \overline{D}_a$ by countably many open disks such that every point is contained in finitely many such disks. Take a partition of unity subordinates to this cover: there exist smooth functions $\rho_i$ on the annulus such that
\begin{enumerate}
\item   $0\leq \rho_i\leq 1$, and $\mbox{supp}\,\rho_i\subset\subset D_\frac{1}{a}\backslash \overline{D}_{a^2}$.
\item  $\sum_i\rho_i(z)=1$, $\forall z\in D_\frac{1}{a}\backslash \overline{D}_{a^2}$.
\item  For any $z\in D_\frac{1}{a}\backslash \overline{D}_{a^2}$, there is a neighborhood $V$ of $z$, such that
there are only finitely many $\rho_i$ with
$\mbox{supp}\,\rho_i\cap V\neq\emptyset$.
\end{enumerate}

By (3), there are only finitely many $\rho_i$ whose support intersects the compact set $\overline{D}\backslash D_a$ and we label them as $\rho_1$, $\cdots$, $\rho_m$, and assume $\mbox{supp}\,\rho_i\subset D_{r_i}(z_i)$, where $D_{r_i}(z_i)\subset D_\frac{1}{a}\backslash \overline{D}_{a^2}$.
By Corollary \ref{L.infinity}, we can find $v_i\in L^\infty(\C)\cap W^{1,2}(\C)$, which  solves the equation
$$-\Delta v_i=\ast \widetilde\varphi^{\ast} \omega,\s \forall z\in D_{r_i}(z_i),$$
such that
$$\|v_i\|_{L^\infty(\C)}+\|\nabla v_i\|_{L^2(\C)}\leq C\|\nabla\widetilde\varphi\|_{L^2
(D_{r_i}(z_i))}.$$
Then on $D\backslash \overline{D}_a$,
$$
-\Delta \sum_{i=1}^m\rho_iv_i=\ast \widetilde\varphi^{\ast} \omega-2\sum_{i=1}^m\nabla\rho_i\nabla v_i-\sum_{i=1}^mv_i\Delta\rho_i.
$$
Let $v'$ be the solution to the Dirichlet problem in $D$:
$$
\left\{\begin{array}{lll}
-\Delta v'&=&2\sum\limits_{i=1}^m\nabla\rho_i\nabla v_i+\sum\limits_{i=1}^m v_i\Delta\rho_i,\\[\mv]
v'|_{\partial D}&=&0.
\end{array}\right.
$$
Since
$$
\|\Delta v'\|_{L^2(D)}<C(\max_i\|\Delta\rho_i\|_{C^0},\max_i\|\nabla\rho_i\|_{C^0})
\sum_i\|v_i\|_{W^{1,2}(D)}^2<C\|\nabla\varphi\|_{L^2(D\backslash \overline{D_{a}})},
$$
the elliptic estimates implies 
$$\|v'\|_{L^\infty(D)}<C\|\nabla\varphi\|_{L^2(D_\frac{1}{a}\backslash \overline{D}_{a^2})}.$$
Let $v=\sum_{i=1}^m\rho_iv_i+v'$. Then
$$-\Delta v=\ast \varphi^{\ast} \omega,\s z\in D\backslash \overline{D}_a$$
and
$$\|v\|_{L^\infty(D)}<C\|\nabla\varphi\|_{L^2(D)}
$$
where $C$ depends on $\|\Delta\rho_i\|_{C^0}$ and $\|\nabla\rho_i\|_{C^0}$ for $i=1,..., m$.
\endproof

For a conformal immersion $f:D\rightarrow\R^n$, let $G \in W^{1,2}(D,\CP^{n-1})$
be the associated Gau{\ss} map. Here we embed the Grassmannian
$G(2,n)$ of oriented $2$-planes into $\CP^{n-1}$ by sending an
orthonormal basis $\{e_{1},e_2\}$ to $[(e_1+i e_2)/\sqrt{2}]$. Then
\begin{equation}\label{Gauss.map}
K_g e^{2u}  = \ast G^{\ast} \omega \quad \mbox{ and } \quad
\int_D |\nabla G|^2 = \frac{1}{2} \int_D |A|^2\,d\mu_g
\end{equation}
where $K_g$ is the Gau{\ss} curvature and $A$ is the second fundamental form of $f(D)$ (cf. \cite{M-S}).

\begin{pro}\label{t2}
Let $a\in (0,1)$ and $f\in W^{2,2}_{conf,loc}(D\backslash\overline{D_a},\R^n)$. Set $g=df\otimes df$ and let $\Gamma$ be the set of  closed embedded curves that are
nontrivial in $\pi_1(D\backslash \overline{D_a})$. Then 
$$
\inf_{\gamma\in \Gamma}L_{g}(\gamma)>0
$$
where $L_g(\gamma)$ denotes the length of $\gamma$ measured in $g$. 
\end{pro}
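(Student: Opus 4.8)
The plan is to argue by contradiction: suppose there is a sequence $\gamma_k \in \Gamma$ with $L_g(\gamma_k) \to 0$. Since $f \in W^{2,2}_{conf,loc}$, the conformal factor is $g = e^{2u}g_0$ with $u \in W^{2,2}_{loc}$, and $|A|^2 \in L^1$. The first step is a reduction/normalization: a homotopically nontrivial embedded curve $\gamma_k$ separates $\partial D_a$ from $\partial D$, so it encloses an annular region, and we may assume $\gamma_k$ shrinks toward a fixed intermediate circle $\partial D_{r_0}$ (or, passing to a subsequence, that the Euclidean-small annular neighbourhoods $A_k$ on which $\gamma_k$ lives have $\int_{A_k}|\nabla G|^2 \to 0$, since $|\nabla G|^2 \in L^1$ and the $A_k$ can be taken with vanishing measure — here one uses that short loops in the $g$-metric must lie in regions where either $u$ is very negative on a large Euclidean set, which by the $W^{2,2}\hookrightarrow$ exponential-integrability of $u$ cannot persist, or in thin Euclidean annuli). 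Concretely I would fix a dyadic annulus $A_k = D_{2\rho_k}\setminus \overline{D_{\rho_k}}$ (conformally a fixed annulus after rescaling $z \mapsto z/\rho_k$) through which $\gamma_k$ passes, chosen so that $\int_{A_k}|A|^2\,d\mu_g < \epsilon_0^2$ for $k$ large, where $\epsilon_0$ is the constant from Lemma \ref{key}.

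The heart of the argument is then a mean-value / Harnack-type estimate for $u$ on such an annulus. After rescaling $A_k$ to the fixed annulus $D \setminus \overline{D_{1/2}}$, the relation \eqref{Gauss.map} gives $-\Delta u = K_g e^{2u} = \ast G^*\omega$ there, with $\int |\nabla G|^2 = \tfrac12\int|A|^2\,d\mu_g < \epsilon_0^2/2$ small. Applying Lemma \ref{key} (with $a = 1/2$) produces $v \in L^\infty$ solving $-\Delta v = \ast G^*\omega$ with $\|v\|_{L^\infty} \le C\|\nabla G\|_{L^2} \le C\epsilon_0$, uniformly. Then $u - v$ is harmonic on the fixed annulus, and one controls it via its boundary behaviour: the key point is that $\int e^{2u} = \mu_g(A_k) \to 0$ (since $A_k$ has vanishing $g$-measure — a short separating loop bounds a region of small area, by the isoperimetric-type control, OR one simply notes $\mu_g(D\setminus\overline{D_a}) < \infty$ so tail annuli have vanishing area) while the length of the rescaled image of $\gamma_k$ in the metric $e^{2u}g_0$ also tends to $0$. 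But a harmonic function $w = u - v$ on the fixed annulus with $e^{2w}$ having both small integral AND such that $\int_{\gamma}e^{w}\,ds \to 0$ along every separating curve forces $w \to -\infty$ uniformly, which then makes $\int_{\gamma_0}e^{u}\,ds \to 0$ on the core circle impossible to reconcile with positivity of the full $g$-length of that same (fixed, non-degenerating) core circle in the original metric — a contradiction, since the core circle $\partial D_{r_0}$ has a fixed positive $g$-length $\int_{\partial D_{r_0}} e^{u}\,ds > 0$ as $f$ is a genuine immersion there.

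Let me restate the contradiction more carefully, since that is where the real content sits. The mean value $\bar u_k := \fint_{A_k} u$ (on the rescaled annulus) splits as $\bar v_k + \bar w_k$ with $|\bar v_k| \le C\epsilon_0$ bounded, and the oscillation of $w_k = u-v$ on the slightly-smaller annulus is controlled by its Dirichlet energy, which is controlled because $\int |\nabla u|^2$ over $A_k$ is small (tail of an $L^2$ function over sets of vanishing measure... — here I must be careful: $\nabla u \in L^2$ only if one knows $\int_D |\nabla u|^2 < \infty$, which follows from $u \in W^{2,2}_{loc}$ only on compact subsets, so the chosen $A_k$ must stay in the fixed annulus $D_{2/3}\setminus \overline{D_{a}}$, say). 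Granting bounded oscillation, $u \approx \bar u_k$ on most of $A_k$, so $L_g(\gamma_k)$ being small forces $\bar u_k \to -\infty$; but then $\mu_g$ of a fixed sub-annulus $\to 0$, and iterating (or using that $u$ is subharmonic-up-to-the-$L^1$ term, so a maximum/mean-value principle propagates the smallness of $u$ inward across $D \setminus \overline{D_a}$) yields $u \to -\infty$ on a fixed interior circle, contradicting $\int_{\text{fixed circle}} e^u\,ds > 0$.

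The main obstacle I anticipate is making rigorous the step ``$\gamma_k$ can be replaced by a controlled dyadic/rescaled annulus on which the Gauss-map energy is small and $\nabla u \in L^2$,'' i.e. localizing the problem to a fixed conformal annulus without losing either the smallness of $\int|A|^2$ or the $W^{2,2}$ bounds, and simultaneously keeping track of where the hypothetically short loop $\gamma_k$ actually sits (it need not be round, so ``$\gamma_k$ passes through a dyadic annulus'' requires a diameter-versus-length or covering argument). Closely related is propagating the resulting pointwise smallness $u \to -\infty$ from the annulus where $\gamma_k$ lives across the whole domain to reach a contradiction at a fixed circle — this should follow from the maximum principle for $\Delta u = -\ast G^*\omega$ after subtracting the bounded solution $v$ from Lemma \ref{key}, but patching the local solutions $v$ on overlapping annuli into a single bounded function (as in the proof of Lemma \ref{key} itself, via partition of unity) is the technical crux.
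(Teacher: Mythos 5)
Your overall frame (contradiction via $L_g(\gamma_k)\to 0$, subtracting a bounded solution $v$ of $-\Delta v=\ast G^*\omega$ from Lemma \ref{key} to isolate a harmonic part of $u$) matches the paper's starting point, but the core of your argument has genuine gaps. First, the localization step is misconceived. Since $f\in W^{2,2}_{conf,loc}$, the conformal factor $e^{2u}$ is bounded below on every compact subset of $D\backslash\overline{D_a}$, so a homotopically nontrivial embedded loop lying in a fixed compact sub-annulus has $L_g\geq c\,L_{g_0}\geq c\pi a$. Hence the $\gamma_k$ do \emph{not} concentrate near an intermediate circle $\partial D_{r_0}$; they must escape into collar neighborhoods of $\partial D_a$ or $\partial D$ (this is exactly the ``Claim'' in the paper's proof). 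Those collars $D_{a+\epsilon}\backslash\overline{D_a}$ are conformally \emph{thin} annuli, not fixed ones, so your dyadic rescaling $z\mapsto z/\rho_k$ onto $D\backslash\overline{D_{1/2}}$ (which is tailored to a puncture at the origin) is unavailable here. Second, every quantitative input you invoke on these collars is missing from the hypotheses: $u$ is not globally $W^{2,2}$ or $W^{1,2}$ up to $\partial D_a$, $\mu_g(D\backslash\overline{D_a})$ need not be finite, and there is no isoperimetric control, so neither the oscillation bound on $u-v$ nor $\mu_g(A_k)\to 0$ can be justified. Third, even granting $\bar u_k\to-\infty$ near the boundary, the inward propagation is not a maximum-principle statement: $-\Delta u=K e^{2u}$ has no sign, and what you would actually need is a uniform $-\infty$ limit of the harmonic part on full circles, which your mean-value argument does not deliver.

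The paper closes precisely this gap with a complex-analytic device you do not have. After reducing to the inner collar (via $z\mapsto a/z$), it writes $u=v+w+\lambda\log|z|$ with $v$ bounded on all of $D\backslash\overline{D_a}$ and $w$ harmonic with vanishing period, takes a holomorphic $F$ with $\mathrm{Re}\,F=w$, so that $|e^F|\leq C e^{u}$, and observes that $\bigl|\oint_{\gamma_k}e^{F(\zeta)}(\zeta-z)^{-1}d\zeta\bigr|\leq C\int_{\gamma_k}e^u\to 0$. Deformation invariance of contour integrals then transfers this to a \emph{fixed} loop, shows $e^F$ extends holomorphically across the hole, and forces $e^F\equiv 0$ by Cauchy's formula --- a contradiction. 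In other words, the ``propagation from the degenerating loops to a fixed circle'' that you correctly identify as the crux is accomplished by Cauchy's theorem, not by Harnack or maximum-principle estimates; without that idea (or an equivalent), your outline does not close.
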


\proof
Assume there exists $\gamma_k\in \Gamma$, such that
$L_g(\gamma_k)\rightarrow 0$ as $k\to\infty$. We consider $\gamma_k$ as
a smooth map from $[0,1]$ to $D\backslash \overline{D_a}$
with $\gamma_k(0)=\gamma_k(1)$. Let $g=e^{2u}g_0$. Then
\begin{equation}\label{go to 0}
\int_{\gamma_k}e^u=\int^{1}_0 e^{u(\gamma_k)}\sqrt{(x_k')^2+(y_k')^2}\,dt=L_{ g}(\gamma_k)\rightarrow 0\s\mbox{as $k\to \infty$}.
\end{equation}

{\it Claim.} For any $\epsilon>0$, $\gamma_k\subset
(D\backslash \overline{D_{1-\epsilon}})\cup (D_{a+\epsilon}\backslash \overline{D_a})$ for sufficiently large
$k$.

Assume the claim is not true. Without loss of generality, we may assume
$\gamma_k(0)\in \overline{D_{1-\epsilon}}\backslash
D_{a+\epsilon}$. Since $f\in W^{2,2}_{loc}(D\backslash\overline{D_a},\R^n)$, we can find
$C=C(\epsilon,f)$, such that for any $\gamma\subset D_{1-\frac{\epsilon}{2}}\backslash D_{a+\frac{\epsilon}{2}}$,
$$L_g(\gamma)\geq CL(\gamma),$$
where $L$ is the length of $\gamma$ measured in the Euclidean metric $g_0$.
If $\gamma_k\subset D_{1-\frac{\epsilon}{2}}\backslash
D_{a+\frac{\epsilon}{2}}$, then
$$
L_g(\gamma_k)\geq CL(\gamma)\geq C\pi\epsilon.
$$
If $\gamma_k$ is not in $D_{1-\frac{\epsilon}{2}}\backslash
D_{a+\frac{\epsilon}{2}}$, then we can find $t_0$, such that
$\gamma_k(t_0)\in \partial D_{1-\frac{\epsilon}{2}}\cup \partial
D_{a+\frac{\epsilon}{2}}$, and
$$
\gamma([0,t_0))\subset D_{1-\frac{\epsilon}{2}}\backslash
D_{a+\frac{\epsilon}{2}}.
$$
Thus,
$$
L_g(\gamma_k)\geq L_g(\gamma_k|_{[0,t_0]})\geq CL(\gamma_k|_{[0,t_0]})\geq C|\gamma_{k}(t_0)-
\gamma_k(0)|\geq \frac{C\epsilon}{2}.
$$
It contradicts the fact that $L_g(\gamma_k)\rightarrow 0$. Now the Claim is established.

Without loss of generality, we may assume  $\gamma_k\subset D_{a+\epsilon}\backslash \overline{D_a}$ for sufficiently large
$k$, because $f(\frac{a}{z})$ is also in $W^{2,2}_{conf,loc}(D\backslash \overline{D_a},\R^n)$ and the metric induced by $f(\frac{a}{z})$ is uniformly equivalent to $g$.
By Lemma \ref{key},  there exists a smooth bounded function $v$ on $\C$ solving the equation
 $$
 -\Delta v=Ke^{2u}
 $$
on $D\backslash \overline D_{a}$, where $K$ is the Gau{\ss} curvature of $g$ and $\Delta$ is the Euclidean Laplacian. Noting that $u$ is also a solution to the same equation, $u-v$ is a harmonic function on $D\backslash \overline D_a$ with respect to $g_0$. Consider the harmonic function $w$
on $D\backslash\overline D_{a}$ defined by
$$
w=u-v-\lambda\log |z|,\,\,\,\,\mbox{where}\,\,\lambda=\frac{1}{2\pi r_0}\int_{\partial D_{r_0}}\frac{\partial (u-v)}{\partial r},\s \mbox{and}\s r_0\in(a,1).
$$
Then we can find a holomorphic function $F$ on $D\backslash\overline D_{a}$ with real part $w$ (cf. \cite{Conway}, Theorem 15.1.3). Evidently, $e^F$ is holomorphic on $D\backslash \overline{D}_a$, with
$
|e^F|=e^{w}.
$

Since $|z|\in (a,1)$ and $\|v\|_{L^\infty}<+\infty$, it follows from the definition of $w$ that
$$
w(z)\leq u(z)+C(a,\lambda,\|v\|_{L^\infty}).
$$
Then
\begin{equation*}
\left|\frac{1}{2\pi i}\int_{\gamma_k}\frac{e^{F(\zeta)}}
{\zeta-z}\,d\zeta\right|<C\int_{\gamma_k}|e^F|<
C\int_{\gamma_k}e^{u}\rightarrow 0\s\mbox{as}\s k\to \infty.
\end{equation*}
Let $\gamma_{\epsilon_0}=(a+\epsilon_0)(\cos\theta,\sin\theta)$ for some fixed $\epsilon_0>0$.
The Deformation Invariance Theorem for holomorphic functions then asserts
$$
\frac{1}{2\pi i}\int_{\gamma_{\epsilon_0}}\frac{e^{F(\zeta)}}
{\zeta-z}\,d\zeta=\frac{1}{2\pi i}\lim_{k\rightarrow \infty}\int_{\gamma_k}\frac{e^{F(\zeta)}}{\zeta-z}\,d\zeta=0.
$$
Cauchy's integral formula implies, for $z\in D_{R}\backslash \overline D_{\epsilon_0}, R\in(a+\epsilon_0,1)$,
$$
e^{F(z)}=\frac{1}{2\pi i}\int_{\partial D_R}\frac{e^{F(\zeta)}}
{\zeta-z}\,d\zeta-\frac{1}{2\pi i}\int_{\gamma_{\epsilon_0}}\frac{e^{F(\zeta)}}
{\zeta-z}\,d\zeta=\frac{1}{2\pi i}\int_{\partial D_R}
\frac{e^{F(\zeta)}}{\zeta-z}\,d\zeta
$$
where all loops are positively oriented.
However, the Cauchy integral on the right hand side defines a holomorphic function on entire $D_R$, hence, $e^F$ can be extended to a  holomorphic ${\mathcal F}$ on $D_R$. By Cauchy's integral formula, for any $z\in D_{a/2}$,
\begin{eqnarray*}
\left|{\mathcal F}(z) \right|&=& \left|\frac{1}{2\pi i}\int_{\gamma_k}\frac{\mathcal F(\zeta)}{\zeta-z}\,d\zeta \right|\\
&=&\frac{1}{2\pi }\left|\int_{\gamma_k}\frac{e^{F(\zeta)}}{\zeta-z}\, d\zeta\right|\\
&\leq&\frac{1}{a\pi}\int_{\gamma_k}|e^F|
\end{eqnarray*}
where in the second equality we used the fact that ${\mathcal F}$ equals $e^F$ on $\gamma_k$.
But \eqref{go to 0} then implies
${\mathcal F}(z)=0$ by letting $k\to \infty$, which in turn asserts ${\mathcal F}(z)=0$. Since $z$ is arbitrary in $D_{a/2}$, we conclude that
${\mathcal F}$ vanishes identically on $D_{a/2}$, hence on $D_R$. This further implies that $e^F$ must vanish identically. But this is impossible for it could only happen if $w=-\infty$ everywhere in $D\backslash\overline D_a$. \endproof

\begin{thm}\label{t3}
Let $f:D\backslash\{0\}\rightarrow\R^n$ be a smooth immersion that satisfies
\begin{enumerate}
\item $\|\nabla f\|_{L^\infty(D\backslash\{0\})}<+\infty$,
\item $\int_{D}|A|^2<+\infty$, where $A$ is the second fundamental form of $f$ on $D\backslash\{0\}$.
\end{enumerate}
\noindent Set $g=df\otimes df$.
Then for any $\delta\in(0,1)$, $(D_\delta\backslash\{0\},g)$ is conformal to $(D\backslash\{0\},g_0)$.
\end{thm}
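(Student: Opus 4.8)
The plan is to follow the scheme used for Theorem~\ref{t1}, with Proposition~\ref{t2} playing the role of the harmonic rigidity Lemma~\ref{l2} in eliminating the annular conformal type. Since $g$ is a smooth metric on $D\backslash\{0\}$, the uniformization theorem provides a biholomorphism $\psi$, defined on all of $D\backslash\{0\}$, onto exactly one of the three models: (i) a finite annulus $(D\backslash\overline{D_{r_0}},g_0)$ with $r_0\in(0,1)$, (ii) the punctured plane $(\C\backslash\{0\},g_0)$, or (iii) the punctured disk $(D\backslash\{0\},g_0)$. Pushing $f$ forward gives a smooth conformal immersion $\hat f:=f\circ\psi^{-1}$ of the model surface into $\R^n$ with induced metric $\hat g=(\psi^{-1})^{*}g$; thus $\psi$ is a length-preserving diffeomorphism from $(D\backslash\{0\},g)$ onto the model, carrying nontrivial embedded loops to nontrivial embedded loops, and by change of variables and hypothesis (2) one has $\int|A_{\hat f}|^{2}\,d\mu_{\hat g}=\int_{D\backslash\{0\}}|A|^{2}\,d\mu_g<\infty$.

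First I would record that, by hypothesis (1), $L_g(\partial D_\rho)=\int_0^{2\pi}|\partial_\theta f(\rho e^{i\theta})|\,d\theta\le 2\pi\rho\,\|\nabla f\|_{L^\infty}\to 0$ as $\rho\to 0$, so the infimum of the $g$-lengths of nontrivial embedded loops in $D\backslash\{0\}$ is $0$. This excludes Case (i): there $\hat f\in W^{2,2}_{conf,loc}(D\backslash\overline{D_{r_0}},\R^n)$, so Proposition~\ref{t2} would force a positive lower bound for the $\hat g$-lengths of nontrivial embedded loops in $D\backslash\overline{D_{r_0}}$, whereas the $\psi(\partial D_\rho)$ are such loops with $L_{\hat g}(\psi(\partial D_\rho))=L_g(\partial D_\rho)\to 0$. (If a smallness hypothesis is needed to invoke Proposition~\ref{t2}, one first passes to a thin annular neighborhood of the end of $D\backslash\overline{D_{r_0}}$ that corresponds to the puncture; after rescaling it is again of the form $D\backslash\overline{D_a}$, on it the $L^2$-norm of $A$ is as small as desired by absolute continuity of the integral in hypothesis (2), and it still contains the $\psi(\partial D_\rho)$ for small $\rho$.)

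It remains to treat Cases (ii) and (iii), which is purely topological and runs exactly as in the proof of Theorem~\ref{t1}. Fix $\delta\in(0,1)$. Every biholomorphism of $D\backslash\{0\}$ extends to a rotation of $D$, so in Case (iii) $\psi$ sends the end $z\to 0$ to the end $w\to 0$; in Case (ii), composing with the inversion $w\mapsto 1/w$ if necessary arranges the same. Then $C:=\psi(\partial D_\delta)$ is an embedded closed curve that is nontrivial in $\pi_1$ of the model, hence encloses $0$, and $\psi(D_\delta\backslash\{0\})$ is the component of the complement of $C$ in the model which is a neighborhood of the end $w\to 0$, namely the bounded Jordan domain $\mathrm{int}(C)$ with its interior point $0$ deleted. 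By the Riemann mapping theorem $\mathrm{int}(C)$ is conformally equivalent to $D$, and composing with a M\"obius automorphism of $D$ taking the image of $0$ to the origin identifies $\mathrm{int}(C)\backslash\{0\}$ conformally with $D\backslash\{0\}$. Hence $(D_\delta\backslash\{0\},g)$ is conformal to $(D\backslash\{0\},g_0)$, as claimed.

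The only substantive step is the exclusion of Case (i), whose content is entirely the invocation of Proposition~\ref{t2}; the care required there is to confirm that $\hat f$ lies in the class to which that proposition applies and that the shrinking coordinate circles remain admissible (nontrivial, embedded) test loops after transport by the uniformizing map. Cases (ii) and (iii) are routine once one has the uniformization theorem and the Jordan curve theorem, just as in Theorem~\ref{t1}.
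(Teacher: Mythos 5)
Your argument is correct and is essentially the paper's own proof: uniformization reduces to three conformal types, the finite-annulus type is excluded by transporting the coordinate circles $\partial D_\rho$ (whose $g$-length is $O(\rho)$ by the Lipschitz bound) through the uniformizing map and contradicting Proposition~\ref{t2}, and the remaining two types are handled exactly as in Theorem~\ref{t1}; note that Proposition~\ref{t2} as stated needs no smallness of $\|A\|_{L^2}$, so your parenthetical localization is unnecessary. The only imprecision is your appeal to ``every biholomorphism of $D\backslash\{0\}$ extends to a rotation'' (the map $\psi$ is not an automorphism of the standard punctured disk), but the needed fact --- that the puncture end goes to the puncture end --- follows anyway, since otherwise $(D_\delta\backslash\{0\},g)$ would be conformal to a nondegenerate annulus, which the same Proposition~\ref{t2} argument forbids.
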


\proof In light of the proof of Theorem \ref{t1}, the punctured Riemannian disk $(D\backslash\{0\},g)$ has only three distinct conformal types, and to establish the theorem, it suffices to prove that $(D\backslash\{0\},g)$ is not conformal to $(D\backslash \overline{D}_{a},g_0)$ for any $a\in(0,1)$.

Suppose there is a conformal diffeomorphism
$$
\varphi:D\backslash \overline{D}_a\rightarrow D\backslash\{0\}
$$
 such that
$$
\g(z):=(\varphi^*g)(z)=e^{2u(z)}g_0(z),\s z\in D\backslash \overline{D}_a.
$$
Then $f\circ\varphi$ is a conformal immersion of $D\backslash
\overline{D_a}$ in $\R^n$ and
$\bar{g}=d(f\circ\varphi)\otimes d
(f\circ\varphi)$.

Let $\gamma_\epsilon(\theta)=(\epsilon\cos\theta,\epsilon\sin\theta)$ on
$D\backslash\{0\}$. Since $\|\nabla f\|_{L^\infty(D)}<+\infty$, the length of $\gamma_\epsilon$ in the induced metric $g$ by $f$ satisfies
$$
L_g(\gamma_\epsilon)=\int_0^{2\pi}\sqrt{g(\gamma_\epsilon'(\theta),
\gamma_\epsilon'(\theta))}\,d\theta=O(\epsilon)
\rightarrow 0\s \mbox{ as }\s\epsilon\rightarrow 0.
$$
Therefore, $L_{\bar{g}}(\varphi^{-1}(\gamma_\epsilon))=L_g
(\gamma_\epsilon)\to 0$ as $\epsilon\to 0$, which contradicts Proposition \ref{t2}.
\endproof

\noindent{\bf Remark.} 
Theorem \ref{t3} is not true if we replace $\|\nabla f\|_{L^\infty}<+\infty$ with $\mu(f)<+\infty$. For example,
$f(z)=f(re^{i\theta})=(e^{i\theta},r)$ is a smooth
immersion of $D\backslash\{0\}$ in $\R^3$ with
$\mu(f)+\|A\|_{L^2}<+\infty$. However, 
$f(D\backslash\{0\})=S^1\times (0,1)$ is not conformal to
$D\backslash\{0\}$.


\subsection{Existence of conformal structures of non-smooth immersions away from the puncture} We first define the function space that we will work with.
For $0<\lambda<\Lambda$, we denote
$$
BL^n(D,\lambda,\Lambda)=\{f\in W^{2,2}(D,\R^n):
\lambda g_0< df\otimes df<\Lambda g_0,\s \mbox{a.e.} \, x\in D\}.
$$
Every $f\in BL^n(D,\lambda,\Lambda)$ is a local embedding in the sense of the following lemma.

\begin{lem}\label{local.embedding} Let
$f\in BL^n(D,\lambda,\Lambda)$. Then there exist
$\delta$ and $\lambda_1>0$ which only depend on
 $\lambda$ and $\Lambda$, such that
$$
\frac{|f(x)-f(x')|}{|x-x'|}\geq\lambda_1, \s\forall \, |x|,|x'|<\delta.
$$
\end{lem}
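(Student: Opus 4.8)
The plan is to prove Lemma \ref{local.embedding}, the statement that an $f \in BL^n(D,\lambda,\Lambda)$ is bi-Lipschitz on a small disk with a lower bound $\lambda_1$ depending only on $\lambda,\Lambda$.

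First I would record what the hypotheses give us directly. Since $f \in W^{2,2}(D,\R^n)$ in two dimensions, $W^{2,2} \hookrightarrow C^0$ and moreover $\nabla f$ lies in a Sobolev space that embeds into $L^p$ for every $p<\infty$; more importantly, the metric bound $df \otimes df < \Lambda g_0$ says $\|\nabla f\|_{L^\infty}^2 \leq 2\Lambda$, so $f$ is globally Lipschitz with constant $C(\Lambda)$, hence $|f(x)-f(x')| \leq C(\Lambda)|x-x'|$ — but that is the easy direction. The lower bound $\lambda g_0 < df \otimes df$ gives $|df(v)| \geq \sqrt{\lambda}\,|v|$ for a.e. $x$ and all $v$, i.e. the differential is uniformly injective a.e.; integrating along a segment gives $|f(x)-f(x')| \geq \sqrt{\lambda}\,|x-x'| - (\text{oscillation of } df \text{ along the segment})$. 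So the content of the lemma is that by shrinking to a disk $D_\delta$ on which the oscillation of $\nabla f$ is controlled, the error term can be absorbed into half of the main term. The quantitative input that makes this work uniformly is that $\|A\|_{L^2}$ — equivalently, via \eqref{Gauss.map} and the structure of $W^{2,2}$ conformal (or here merely quasi-conformal) immersions — controls the modulus of continuity of $\nabla f$ in an averaged sense; but at this stage of the paper we do not yet have the conformal normalization, so I would instead argue directly from $f\in W^{2,2}$.

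The key steps, in order: (1) Write, for $x,x'\in D_\delta$, $f(x)-f(x') = \int_0^1 df_{x'+t(x-x')}(x-x')\,dt$ and split it as $df_{x_0}(x-x') + \int_0^1 \big(df_{x'+t(x-x')} - df_{x_0}\big)(x-x')\,dt$ for a suitable reference point $x_0$. The first term has norm $\geq \sqrt{\lambda}|x-x'|$ by the pointwise metric lower bound (choosing $x_0$ a Lebesgue point, or better, integrating the inequality along the segment which avoids the a.e. issue: $\int_0^1 |df(x-x')|\,dt \geq \sqrt\lambda|x-x'|$ holds for a.e. line by Fubini, and then a triangle inequality isolates the oscillation). (2) Bound the oscillation term: $\big|\int_0^1 (df_{\xi_t} - \text{avg})(x-x')\,dt\big| \leq |x-x'|\cdot \big(\text{mean oscillation of }\nabla f\text{ on }D_\delta\big)$, and since $\nabla f \in W^{1,2}(D)$, by Poincaré the mean oscillation of $\nabla f$ over $D_\delta$ is bounded by $C\,\delta^0 \big(\int_{D_\delta}|\nabla^2 f|^2\big)^{1/2}$ in an $L^2$-averaged sense — more carefully, one uses that $\nabla f\in W^{1,2}$ has vanishing mean oscillation, so $\mathrm{osc}_{D_\delta}$-type quantities go to $0$ as $\delta\to 0$. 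But uniformity in $f$ is the issue here (see below), so the cleaner route is step (3): use the a.e. metric bounds more cleverly, not the $W^{2,2}$ regularity — namely, observe that $f$ restricted to $D_\delta$, composed with suitable rescaling, and the bounds $\lambda g_0 < df\otimes df < \Lambda g_0$ are scale-invariant, so if the conclusion failed one could extract (via the compactness of $W^{2,2}\cap W^{1,\infty}$ bounded sequences, weakly in $W^{2,2}$ and strongly in $C^0$ and weak-$*$ in $W^{1,\infty}$, after rescaling to unit scale) a limit map $f_\infty$ which is an affine map (second fundamental form / second derivatives forced to vanish in the limit) but degenerate on some segment, contradicting $\lambda g_0 \le df_\infty\otimes df_\infty$. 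This compactness/blow-up argument is what delivers the dependence of $\delta,\lambda_1$ on $\lambda,\Lambda$ alone.

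The main obstacle is precisely this uniformity: for a single fixed $f$ the estimate is an easy consequence of $\nabla f$ being continuous (which $W^{2,2}$ in 2D \emph{almost} gives — actually $W^{2,2}\not\hookrightarrow C^1$, so one already needs the VMO-type argument even for fixed $f$), but the lemma insists $\delta$ and $\lambda_1$ depend only on $\lambda,\Lambda$. Since the hypothesis set $BL^n(D,\lambda,\Lambda)$ does \emph{not} bound $\|\nabla^2 f\|_{L^2}$, the mean-oscillation-of-$\nabla f$ bound is not uniform, and I expect the proof must not use $W^{2,2}$ at all for the uniform estimate — only the two-sided pointwise metric comparison, which is scale-invariant. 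The contradiction/compactness argument sketched in step (3) circumvents this, but one must be careful that the limit map $f_\infty$ still satisfies $\lambda g_0 \le df_\infty\otimes df_\infty \le \Lambda g_0$ a.e. (weak-$*$ $W^{1,\infty}$ convergence preserves the upper bound automatically and the lower bound passes to the limit because $df_n \to df_\infty$ in, say, $L^2_{loc}$, so a subsequence converges a.e.), and that the rescaled domains exhaust $\R^2$ so that $f_\infty$ is entire and affine — then bi-Lipschitzness of a nondegenerate affine map is immediate, contradicting the failure of the estimate along the (limiting) pair of points. Writing this blow-up cleanly, and handling the possibility that the bad pairs $x_k,x_k'$ have $|x_k-x_k'|$ comparable to the scale $\delta_k$ versus much smaller, is where the care is needed.
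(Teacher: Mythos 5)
Your step (3) is essentially the paper's own proof: one negates the statement to obtain $x_k,x_k'\to 0$ with $|f(x_k)-f(x_k')|/|x_k-x_k'|\to 0$, rescales by exactly $r_k=|x_k-x_k'|$ (which disposes of your worry about the relative size of $|x_k-x_k'|$ versus the scale, since the two bad points become $0$ and a unit vector $e_k\to e$), uses absolute continuity of $\int|\nabla^2 f|^2$ over the shrinking disks $D_{2r_k}(x_k)$ to force the blow-up limit to be affine, $f_0=ax^1+bx^2$, and then derives precisely the contradiction you describe: $f_0(e)=0$ forces $b=0$, while the strong $L^p$ convergence of $df_k\otimes df_k$ carries the lower bound $\lambda g_0$ to the limit and forces $b\neq 0$. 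The only discrepancy is your guess that the argument must avoid the $W^{2,2}$ hypothesis for the sake of uniformity: the paper's proof does use $W^{2,2}$ of the fixed $f$ at exactly that point, and accordingly yields $\delta,\lambda_1$ for the given $f$ rather than uniformly over all of $BL^n(D,\lambda,\Lambda)$ --- which is all that is needed in the later applications.
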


\proof

Assume this is not true. Then we can find
$x_k,x_k'\in D$,
such that $x_k,x_k'\rightarrow 0$ and
$$
\frac{|f(x_k)-f(x_k')|}{|x_k-x_k'|}\rightarrow 0, \s \mbox{as $k\to\infty$}.
$$
Write $x_k'=x_k+r_ke_k$ for some unit vector $e_k$ for $r_k=|x_k-x_k'|$ and define
$$
f_k(x)=\frac{f(x_k+r_kx)-f(x_k)}{r_k}.
$$
Without loss of generality, we assume $e_k\rightarrow e:=(0,1)\in\R^2$.
Then $f_k(e)\rightarrow 0\in\R^2$ as
\begin{eqnarray*}
|f(x_k+r_ke)-f(x_k)|&\leq& |f(x_k+r_ke_k)-f(x_k)|+|f(x_k+r_ke)-f(x_k+r_ke_k)|\\
&\leq& \mbox{o}(r_k)+\Lambda r_k|e_k-e|.
\end{eqnarray*}
It is easy to check that
$$
\int_{D_2}|\nabla^2f_k|^2\rightarrow 0,\s
\|\nabla f_k\|_{L^\infty(D_2)}<\Lambda.
$$
Then we may assume $f_k\rightarrow f_0=ax^1+bx^2$
for some $a$, $b\in\R^n$
 strongly  in $W^{2,2}(D)\cap C^0(D)$. Thus
$f_0(0,1)=0$ which implies $b=0$.

However,  since $df_k\otimes df_k(x)=df\otimes
df(x_k+r_kx)$,  we have $df_k\otimes df_k\geq
\lambda((dx^1)^2+(dx^2)^2)$ for a.e. $x$, and $df_k\otimes df_k$ converges
strongly in $L^p$ for any $p>0$. Thus $b\neq 0$ as $\lambda>0$. A contradiction.
\endproof

Next, we show that every $f$ in $BL^n(D,\lambda,\Lambda)$, which is an embedding in the sense of Lemma \ref{local.embedding}, can be approximated by smooth ones with similar geometric properties.
\begin{lem}\label{approximation}
Let $f\in BL^n(D,\lambda,\Lambda)$ and assume $f$ is an embedding. Then we can find
$f_k\in C^\infty(D,\R^n)$ such that for any fixed $r\in(0,1)$, $f_k$ is an
embedding on $D_r$ and $f_k$
converges to $f$ strongly in $W^{2,2}(D_r)$. Moreover, we have for large $k$
\begin{equation}\label{A}
\frac{\lambda}{2}g_0<df_k\otimes df_k<2\Lambda g_0\,\,\,\mbox{on $D_r$},\s
\lim_{k\rightarrow+\infty}\int_{D_r}|A_{f_k}|^2=
\int_{D_r}|A_f|^2d\mu_f.
\end{equation}
\end{lem}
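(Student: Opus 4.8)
\emph{Set-up.} The plan is to take $f_k:=f\ast\rho_{1/k}$, the standard mollifications ($\rho_\varepsilon(y)=\varepsilon^{-2}\rho(y/\varepsilon)$ for a fixed even nonnegative $\rho\in C_c^\infty(B_1)$ with $\int\rho=1$), which are smooth on $D_{1-1/k}$; I extend each to an element of $C^\infty(D,\R^n)$ in an arbitrary way outside $D_{1-2/k}$, which is immaterial for the assertions on a fixed $D_r$ once $1/k<1-r$. From $df\otimes df<\Lambda g_0$ one has $\|\nabla f\|_{L^\infty(D)}\le\sqrt{2\Lambda}$, hence $\|\nabla f_k\|_{L^\infty}\le\|\nabla f\|_{L^\infty}$, and the usual properties of mollification give, for every fixed $r\in(0,1)$: $f_k\to f$ strongly in $W^{2,2}(D_r)$ and uniformly on $\overline{D_r}$ (using $W^{2,2}(D)\hookrightarrow C^0(\overline{D})$ in dimension two), and $\nabla f_k\to\nabla f$ in $L^p(D_r)$ for every $p<\infty$ (being $L^\infty$-bounded and $L^2$-convergent), in particular a.e.\ along a subsequence.

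\emph{The two-sided metric bound.} The upper bound is Jensen's inequality: for a unit vector $\xi$,
\[
df_k\otimes df_k(\xi,\xi)(x)=\Big|\int\nabla f(x-y)\,\xi\,\rho_{1/k}(y)\,dy\Big|^2\le\int|\nabla f(x-y)\,\xi|^2\,\rho_{1/k}(y)\,dy\le\Lambda ,
\]
so $df_k\otimes df_k\le\Lambda g_0<2\Lambda g_0$ on $D$ for all $k$. For the lower bound, fix $x\in D_r$ and set $A:=\nabla f_k(x)$; using $\|\rho_{1/k}\|_{L^\infty}\le Ck^2$, near-optimality of the mean, and the two-dimensional Poincar\'e inequality applied to $\nabla f\in W^{1,2}$,
\[
\int|\nabla f(\,\cdot\,)-A|^2\,\rho_{1/k}(y)\,dy\;\le\;C\!\int_{B_{1/k}(x)}|\nabla^2f|^2\;=:\;C\,\omega_k(x)^2 ,
\]
and $\sup_{x\in\overline{D_r}}\omega_k(x)\to0$ as $k\to\infty$ because $|\nabla^2f|^2\in L^1(D)$ (uniform absolute continuity of the integral). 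Since $\int|\nabla f(\cdot)\xi|^2\rho_{1/k}=|A\xi|^2+\int|(\nabla f(\cdot)-A)\xi|^2\rho_{1/k}$ and $|\nabla f(\cdot)\xi|^2\ge\lambda$ a.e., we get $|\nabla f_k(x)\xi|^2\ge\lambda-C\,\omega_k(x)^2\ge\lambda/2$ for $k$ large, uniformly on $D_r$. This is the only place where the hypothesis $\nabla^2f\in L^2$ is really used: as $W^{1,2}\not\hookrightarrow C^0$ in dimension two, $df_k\otimes df_k$ need not converge uniformly, and the $L^2$ control of the oscillation of $\nabla f$ is exactly what substitutes for that.

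\emph{Curvature energy and immersion.} With $\tfrac\lambda2 g_0<df_k\otimes df_k<2\Lambda g_0$ on $D_r$ for $k$ large, $f_k$ is a smooth immersion there, and $|A_{f_k}|^2\,d\mu_{f_k}=Q(\nabla f_k,\nabla^2f_k)\,dx$, where $Q(P,S)$ is quadratic in $S$ with coefficients that are smooth functions of $P$ on a fixed compact set of $2$-frames containing all $\nabla f_k(x)$, $x\in D_r$. Since $\nabla^2f_k\to\nabla^2f$ in $L^2(D_r)$ and the coefficients $c(\nabla f_k)\to c(\nabla f)$ boundedly and a.e.\ along a subsequence, the splitting of $Q(\nabla f_k,\nabla^2f_k)-Q(\nabla f,\nabla^2f)$ into a term bounded by $\|c\|_\infty\,\|\nabla^2f_k-\nabla^2f\|_{L^2}(\|\nabla^2f_k\|_{L^2}+\|\nabla^2f\|_{L^2})$ and a term $\int|c(\nabla f_k)-c(\nabla f)|\,|\nabla^2f|^2$ (dominated convergence), followed by the usual subsequence argument, yields $\int_{D_r}|A_{f_k}|^2\,d\mu_{f_k}\to\int_{D_r}|A_f|^2\,d\mu_f$.

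\emph{$f_k$ is an embedding on $D_r$ --- the main obstacle.} The $L^\infty$ bounds do not by themselves give injectivity of $f_k$ at scales $\sim 1/k$ (deducing it from the local bi-Lipschitz bound of Lemma \ref{local.embedding} leaves a scale gap, since only $\|\nabla^2f_k\|_{L^\infty}=O(k)$ is available). Instead I would cover $\overline{D_r}$ by finitely many disks $B_i=B_{r_i}(x_i)\subset\subset D$ on each of which $\int_{B_i}|A_f|^2$ lies below the $\varepsilon$-regularity threshold of \cite{KSc,M-S,K-L,Helein} --- possible since $|A_f|^2\in L^1(D)$ --- so that over $B_i$, after a rotation $R_i$ of $\R^n$, $f$ is the graph $x\mapsto R_i^{-1}(x,u_i(x))$ of some $u_i\in W^{2,2}(B_i)\cap W^{1,\infty}(B_i)$. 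Because $R_i$ is a fixed rotation and $\rho_{1/k}$ is even, $R_i\circ f_k=(\mathrm{id},\,u_i\ast\rho_{1/k})$ on $B_i$ shrunk by $1/k$, so for $k$ large $f_k|_{B_i}$ is, up to $R_i$, a genuine smooth graph, hence injective on $B_i$; taking $\tau>0$ a Lebesgue number of the cover, $f_k$ is injective on every set of diameter $<\tau$ in $\overline{D_r}$. For $x,x'\in\overline{D_r}$ with $|x-x'|\ge\tau$ one has $|f(x)-f(x')|\ge d_\tau:=\min\{|f(a)-f(b)|:a,b\in\overline{D_r},\,|a-b|\ge\tau\}>0$ (attained and positive, as $f|_{\overline{D_r}}$ is a continuous injection of a compact set), hence $|f_k(x)-f_k(x')|\ge d_\tau-2\|f_k-f\|_{C^0(\overline{D_r})}>0$ for $k$ large. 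Thus $f_k$ is injective on $\overline{D_r}$ and, being a smooth immersion, is an embedding on $D_r$ for $k$ large. I regard this last step --- importing the $\varepsilon$-regularity/graph representation and noting its compatibility with mollification --- as the crux; the rest is routine mollification analysis together with the $L^2$-oscillation estimate above.
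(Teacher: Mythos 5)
Your mollification set-up, the Jensen upper bound, the Poincar\'e/variance argument for $df_k\otimes df_k\geq\frac{\lambda}{2}g_0$, and the splitting argument for $\int_{D_r}|A_{f_k}|^2d\mu_{f_k}\to\int_{D_r}|A_f|^2d\mu_f$ are all correct. The last two replace, respectively, the paper's blow-up contradiction and its Egorov argument, and your quantitative oscillation estimate $\int|\nabla f-\nabla f_k(x)|^2\rho_{1/k}\leq C\int_{B_{1/k}(x)}|\nabla^2f|^2$ together with uniform absolute continuity is arguably cleaner. The far-apart half of the injectivity argument ($|x-x'|\geq\tau$, via $d_\tau>0$ and uniform convergence) also matches the paper.

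The gap is in the step you yourself flag as the crux. The $\varepsilon$-regularity/graph representations you cite are statements about the \emph{image} $f(B_i)$: at best they produce a plane and a function $u_i$ with $f(B_i)$ contained in the graph of $u_i$, i.e. $R_i\circ f=(h_i,\,u_i\circ h_i)$ where $h_i=\pi\circ R_i\circ f$ is some bi-Lipschitz $W^{2,2}$ map of $B_i$ into $\R^2$. They do not give that the given \emph{parametrization} is literally $x\mapsto R_i^{-1}(x,u_i(x))$; that would force $\pi\circ R_i\circ f=\mathrm{id}$, which already fails for $f(x)=2x$, or for any precomposition of a graph parametrization with a bi-Lipschitz self-map of $B_i$. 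Consequently $R_i\circ f_k=\bigl(h_i\ast\rho_{1/k},\,(u_i\circ h_i)\ast\rho_{1/k}\bigr)$, not $(\mathrm{id},\,u_i\ast\rho_{1/k})$, and its injectivity reduces to the injectivity of the mollification of the bi-Lipschitz planar map $h_i$ at scale $1/k$ --- exactly the small-scale difficulty you set out to avoid, now relocated into $h_i$. The correct way to close the small-scale case is the blow-up argument in the proof of Lemma \ref{local.embedding}, applied to the sequence: if $f_k(x_k)=f_k(x_k')$ with $s_k:=|x_k-x_k'|\to0$, rescale by $s_k$ and use that $\int_{B_s(x)}|\nabla^2f_k|^2\leq\int_{B_{s+1/k}(x)}|\nabla^2f|^2$ is uniformly absolutely continuous in $k$, so the rescaled maps converge strongly in $W^{2,2}\cap C^0$ to an injective linear map, contradicting $f_k(x_k)=f_k(x_k')$. (This uniform-absolute-continuity observation is also what is needed to justify the paper's own one-line appeal to an embedding radius $\delta$ independent of $k$.)
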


\proof
Take a nonnegative cut-off function $\eta$ supported in $D$ with
$\int_{\R^2}\eta=1$. Define
\begin{equation}\label{app}
f_k(x)=\int_{\R^2}f(x-t_ky)\eta(y)d\sigma_y,\s \mbox{where}\s x\in D_r\s
\mbox{and} \s t_k\rightarrow 0.
\end{equation}
Then $f_k$ is smooth and converges to $f$ strongly in $W^{2,2}(D_r)$.

First, we prove that
$$
df_k\otimes df_k(x)\geq \frac{\lambda}{2}g_0
$$
when $x\in D_r$ and $k$ is sufficiently large. Assume this is not true. Then we can find $x_k\in D_r$, such that
$$
df_k\otimes df_k(x_k)<\frac{\lambda}{2}g_0
$$
and we may assume $x_k\rightarrow x_0\in \overline{D}_r$. By \eqref{app}, we have
$$
\nabla f_k(x_k)=\int_{\R^2}(\nabla_xf(x_k-t_ky))\eta(y)d\sigma_y=
-\int_{D}\left(\nabla_y\frac{ f(x_k-t_ky)-f(x_k)}{t_k}\right)\eta(y)d\sigma_y.
$$
Let
$$
f_k'(y)=\frac{f(x_k-t_ky)-f(x_k)}{t_k}.
$$
Since
$$
|\nabla_y f_k'|<\sqrt{2\Lambda}
$$
and
$$
\int_{D}|\nabla_y^2 f_k'|^2d\sigma_y=
\int_{D_{t_k}(x_k)}|\nabla^2f|^2d\sigma\rightarrow 0,
$$
we may assume that $f_k'(y)$ converges to
 $ay^1+by^2$ strongly in $W^{2,2}(D)$ for some $a,b\in\R^n$. Then we get
$$
\nabla f_k(x_k)\rightarrow\int_{D}(a,b)\eta=(a,b).
$$
However, since $df_k'\otimes df_k'\geq\lambda g_0$, we have
$
|a|^2,|b|^2\geq\lambda.
$
It contradicts  the choice of $x_k$. Thus we can conclude
$df_k\otimes df_k\geq\frac{\lambda}{2}g_0$. The upper estimate $df_k\otimes df_k\leq 2\Lambda g_0$ is obvious.

Next, we prove that $f_k$ is an embedding on $D_r$ for
 large $k$. Since $f_k$ is smooth with
$df_k\otimes df_k\geq\frac{\lambda}{2}g_0$, we can find $\delta>0$,
such that $f_k$ is an embedding on $D_{\delta}(x)$ for
any $x\in D_r$.
Assume $f_k$ is not an embedding on $D_r$. Then
we can find  $x_k$ and $x_k'$ in $D_r$ with $|x_k-x_k'|\geq\delta>0$ and
$f_k(x_k)=f_k(x_k')$. Up to a subsequence, assume $x_k\rightarrow x_0$ and $x_k'\rightarrow x_0'$. Since $f_k$
converges to $f$ in $C^0(D_r)$, we get $f(x_0)=f(x_0')$ which
contradicts that $f$  is an embedding. Therefore, $f_k$ is an embedding for all large $k$.

Lastly, we prove the equality in \eqref{A}.
Since $f_k$ converges strongly to $f$ in $W^{2,2}(D_r)$ for any $r$,  $\nabla f_k$ converges strongly in $L^2$. Then a
subsequence of $\nabla f_k$ converges almost everywhere. By Egorov's Theorem, for any
$\delta>0$,
we can choose $E$, s.t. $\mu(D_r\backslash E)<\delta$,
and $\nabla f_k$ converges uniformly on $E$. The second fundamental form $A_k$ is the normal component of the Hessian of $f_k$: for $1\leq i,j,p,q,m\leq 2$
\begin{equation}\label{2nd f.f.}
A_{k,ij}=\frac{\partial^2f_k}{\partial x^ix^j}-
\frac{\partial^2 f_k}{\partial x^ix^j}\cdot \sum_{m,p}\frac{\partial f_k}
{\partial x^m}g_k^{mp}\frac{\partial f_k}{\partial x^p},
\end{equation}
where 
$(g_k^{pq})=\left(\frac{\partial f_k}{\partial x^p}\cdot\frac{\partial f_k}{\partial x^q}\right)^{-1}$ is the inverse matrix of $g_k$.  
Since
$$
g_k^{-1}=\frac{1}{\det(g_k)}\left(\begin{array}{cc}
                      g_{k,22}&-g_{k,12}\\
                      -g_{k,12}&g_{k,11}\end{array}\right)
                      $$
                       and
$$
\frac{\lambda^2}{4}\leq \det(g_k)
$$ 
because $\frac{\lambda}{2}g_0\leq df_k\otimes df_k$, the inverse matrix $g^{-1}_k$ is bounded: $|g_k^{pq}|<C$. Then
the uniform convergence of $\nabla f_k$ on $E$ and the strong convergence of $f_k$ in $W^{2,2}(D_r)$ imply
$$
\int_E|A_{f}|^2d\mu_f=\lim_{k\rightarrow+\infty}
\int_E|A_{f_k}|^2d\mu_{f_k}.
$$
Moreover, using $|\nabla f_k|\leq \sqrt{2\Lambda}$ on $D_r\backslash E$, we have
$$
\left(\int_{D_r\backslash E}|A_{f_k}|^2d\mu_{f_k}\right)^\frac{1}{2}<C\|\nabla^2f_k\|_{L^2(D_r
\backslash E)}\rightarrow C\|\nabla^2f\|_{L^2(D_r
\backslash E)},\s\mbox{as $k\to\infty$}.
$$
Then
$$
\limsup_{k\rightarrow+\infty}
\left|\int_{D_r}|A_{f_k}|^2d\mu_{f_k}-\int_{D_r}|A_f|^2d\mu_{f}\right|
\leq C\int_{D_r\backslash E}|\nabla^2f|^2dx+
\int_{D_r\backslash E}|A_{f}|^2.
$$
Recall $|A_f|\in L^2(D)$ by assumption. Letting $\mu(D_r\backslash E)\rightarrow 0$, we finish the proof. \endproof

Next, we prove that a  bi-Lipschitz immersion of
$D$ in $\R^n$ with small $\|A\|_{L^2}$ must be a $W^{2,2}$ conformal immersion. We begin with

\begin{lem}\label{notapoint}
Let $f_k$ be a smooth conformal immersion of $D$ in $\R^n$ with
\begin{equation}\label{d.delta}
\liminf_{r\rightarrow 1}d_{g_{k}}(0,\partial D_r)\geq
\delta >0
\end{equation}
where $d_{g_{k}}$ is the distance function w.r.t. the metric $g_{k}=df_k\otimes df_k$, $\delta$ is a constant and
$$
\int_D|A_{f_k}|^2d\mu_{f_k}<4\pi-\gamma.
$$
Assume
$g_{k} = e^{2u_k} g_0$,
$\mu_{g_k}(D) \leq \Lambda$ and $f_k(0) = 0$.
Then $f_k$ converges  weakly in $W^{2,2}(D_r)$ to a nonconstant $W^{2,2}$-conformal
map, for any $r<1$. Moreover,
we have
$$
\|u_k\|_{L^\infty(D_r)}+\|\nabla u_k\|_{L^2(D_r)}<C(\gamma,\Lambda,r),\s\forall r\in (0,1). $$
\end{lem}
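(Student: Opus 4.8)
The plan is to split the conformal factor $u_k$ via the Gau\ss{} map and the M\"uller--\v{S}ver\'ak estimate, to obtain an a priori \emph{upper} bound on $u_k$ (and weak $W^{2,2}$ compactness of $f_k$) from the area and curvature hypotheses alone, and then to use the distance hypothesis \eqref{d.delta} only to preclude collapse of the induced metric --- which will simultaneously deliver the \emph{lower} bound on $u_k$ and the nonconstancy of the limit.

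First I would introduce the Gau\ss{} map $G_k\in W^{1,2}(D,\CP^{n-1})$ of $f_k$. By \eqref{Gauss.map}, $u_k$ solves the Gau\ss{} equation $-\Delta u_k=\ast G_k^{*}\omega$ on $D$, while $\int_D J(G_k)\le\frac12\int_D|\nabla G_k|^2=\frac14\int_D|A_{f_k}|^2\,d\mu_{f_k}<\pi-\frac{\gamma}{4}<2\pi$. Hence Corollary \ref{L.infinity} produces $v_k\in W^{1,2}_0(\C)\cap L^\infty(\C)$ with $-\Delta v_k=\ast G_k^{*}\omega$ in $D$ and $\|v_k\|_{L^\infty}+\|\nabla v_k\|_{L^2}\le C(\gamma)$, so that $w_k:=u_k-v_k$ is harmonic on $D$. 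Since $w_k$ is harmonic, $e^{2w_k}$ is subharmonic with $\int_D e^{2w_k}\le e^{2\|v_k\|_\infty}\mu_{g_k}(D)\le C(\gamma)\Lambda$, and the sub-mean-value inequality gives the pointwise bound $u_k(x)\le C(\gamma,\Lambda)-\log(1-|x|)$ on $D$; in particular $u_k\le C(\gamma,\Lambda,r)$ on $D_r$ for every $r<1$. Because $\Delta f_k$ is normal for a conformal immersion, $\Delta f_k=(\partial_{11}f_k)^{\perp}+(\partial_{22}f_k)^{\perp}$, so $|\Delta f_k|^2\le 2\sum_{ij}|(\partial_{ij}f_k)^{\perp}|^2$, and since $|A_{f_k}|^2\,d\mu_{f_k}=e^{-2u_k}\sum_{ij}|(\partial_{ij}f_k)^{\perp}|^2\,dx$, the upper bound on $u_k$ yields $\|\Delta f_k\|_{L^2(D_r)}\le C$; together with $|\nabla f_k|^2=2e^{2u_k}$ and $f_k(0)=0$, elliptic estimates then give $\|f_k\|_{W^{2,2}(D_r)}\le C(\gamma,\Lambda,r)$ for all $r<1$. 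A subsequence therefore converges weakly in $W^{2,2}(D_r)$ and strongly in $C^0(D_r)\cap W^{1,p}(D_r)$ to a map $f_\infty$, which is weakly conformal and $W^{2,2}$ because the conformality relations survive the strong $L^2$-convergence of $\nabla f_k$.

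The main obstacle is the lower bound on $u_k$, which I would establish by contradiction and which is the only place the distance hypothesis enters. Fix $r'\in(e^{-2\pi\delta^2/\Lambda},1)$ and suppose $\|u_k\|_{L^\infty(D_{r'})}$ is unbounded. By the previous step $\sup_{D_{r'}}u_k$ is bounded above, so along a subsequence $\inf_{D_{r'}}w_k\to-\infty$; as $w_k$ is harmonic and bounded above on a slightly larger disk, the Harnack inequality forces $w_k\to-\infty$ uniformly on $D_{r'}$, hence $\sup_{D_{r'}}e^{u_k}\to0$. Choosing $\theta_k$ that minimizes $\theta\mapsto\int_{r'}^{1}e^{2u_k(t,\theta)}t\,dt$ gives $\int_{r'}^{1}e^{2u_k(t,\theta_k)}t\,dt\le\frac1{2\pi}\mu_{g_k}(D)\le\frac{\Lambda}{2\pi}$, so by Cauchy--Schwarz the $g_k$-length of the ray $t\mapsto te^{i\theta_k}$ satisfies
$$
\int_0^{s}e^{u_k(t,\theta_k)}\,dt\ \le\ r'\sup_{D_{r'}}e^{u_k}+\Big(\log\tfrac1{r'}\Big)^{1/2}\Big(\tfrac{\Lambda}{2\pi}\Big)^{1/2}\qquad\text{for all }\ r'<s<1 .
$$
Hence $\liminf_{s\to1}d_{g_k}(0,\partial D_s)$ is at most the right-hand side, which tends to $(\log\frac1{r'})^{1/2}(\frac{\Lambda}{2\pi})^{1/2}<\delta$ as $k\to\infty$, contradicting \eqref{d.delta}. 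Thus $\|u_k\|_{L^\infty(D_{r'})}\le C(\gamma,\Lambda,\delta)$, and monotonicity of the $L^\infty$-norm in the radius gives $\|u_k\|_{L^\infty(D_r)}\le C(\gamma,\Lambda,\delta,r)$ for every $r<1$. Interior gradient estimates for the harmonic function $w_k$ then bound $\|\nabla u_k\|_{L^2(D_r)}\le\|\nabla v_k\|_{L^2}+\|\nabla w_k\|_{L^2(D_r)}\le C(\gamma,\Lambda,\delta,r)$, and since $u_k\ge -C(r)$ on $D_r$ forces $|\nabla f_k|^2=2e^{2u_k}\ge c(r)>0$ there, the strong $L^2$-convergence of $\nabla f_k$ yields $\int_{D_r}|\nabla f_\infty|^2\ge c(r)|D_r|>0$, so $f_\infty$ is nonconstant.

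The genuinely delicate point is this last contradiction: the Gau\ss{} map decomposition and the upper bound are essentially forced, but ruling out interior collapse of $g_k$ hinges on playing the lower bound on the intrinsic distance to $\partial D$ off against the area bound $\mu_{g_k}(D)\le\Lambda$ along a carefully chosen radial direction; the balance between these two is exactly what fixes the threshold $r'>e^{-2\pi\delta^2/\Lambda}$.
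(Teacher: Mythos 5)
Your proof is correct, but it follows a genuinely different route from the paper's. The paper disposes of almost the entire statement by invoking H\'elein's compactness theorem (Theorem 5.1.1 of \cite{Helein}), which reduces everything to showing that $f_k$ does not collapse to a point; that single remaining step is then handled by a soft maximum-principle argument: if $u_k\to-\infty$ uniformly on $D_{1/2}$, the radial-length hypothesis \eqref{d.delta} produces, on each ray, a point in $D\backslash D_{1/2}$ where $u_k-v_k\geq c'(\delta)$, so the harmonic function $u_k-v_k$ would have an interior minimum and hence be constant, which is absurd. You instead reprove the compactness from scratch: the same Gau{\ss}-map decomposition $u_k=v_k+w_k$ via \eqref{Gauss.map} and Corollary \ref{L.infinity}, an upper bound on $u_k$ from the subharmonicity of $e^{2w_k}$ and the area bound, the $W^{2,2}$ bound on $f_k$ from $\Delta f_k$ being normal, and then a quantitative Courant--Lebesgue-type contradiction for the lower bound, playing the area bound $\Lambda$ (to select a radial direction of controlled $g_k$-length on the annulus $D\backslash D_{r'}$) against the distance hypothesis \eqref{d.delta}. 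Your version buys self-containedness and an explicit constant (the threshold $r'>e^{-2\pi\delta^2/\Lambda}$), and arguably your non-collapse step is cleaner than the paper's, whose ``interior minimum'' assertion for a harmonic function on the open disk is left somewhat informal; the paper's version is much shorter because the heavy lifting is outsourced to \cite{Helein}. One small imprecision: your appeal to ``monotonicity of the $L^\infty$-norm in the radius'' only transfers the bound from $D_{r'}$ to $D_r$ when $r\leq r'$; for $r$ closer to $1$ you must simply rerun the contradiction argument with $r'=\max\{r,\frac{1}{2}(1+e^{-2\pi\delta^2/\Lambda})\}$, which is legitimate since every $r'\in(e^{-2\pi\delta^2/\Lambda},1)$ is admissible --- this is a one-line fix, not a gap.
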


\proof  By Theorem 5.1.1 in \cite{Helein}, it suffices to show $f_k$ does not converge to a point.
Assume, on the contrary, that $f_k$ converges to a point. Then $u_k$ converges to $-\infty$ uniformly   on $D_\frac{1}{2}$ (cf. \cite{Helein}, \cite{K-L}).
By \eqref{d.delta}, for any fixed $\theta$, we have
$$
\int_0^1e^{u_k(te^{i\theta})}dt\geq \liminf_{r\to 1}  d_{g_k}(0,\partial D_r)\geq\delta.
$$
Then we can find $r_k\in(\frac{1}{2},1)$ such that
 $u_k(r_ke^{i\theta})>c(\delta)$ for all large $k$.

By \eqref{Gauss.map} and Corollary \ref{L.infinity}, there is a function $v_k$ with $\|v_k\|_{L^\infty(D)}<C(\gamma)$ solving
$$
-\Delta v_k=K_{f_k}e^{2u_k}.
$$
Then we have
\begin{enumerate}
\item $(u_k-v_k)\rightarrow-\infty$ uniformly on $D_\frac{1}{2}$;
\item for any fixed $\theta$,
there exists  $r_k\in (\frac{1}{2},1)$, such that $(u_k-v_k)(r_ke^{i\theta})\geq
c'(\delta)$. Here $c'(\delta)$ may be negative.
\end{enumerate}
It follows that $u_k-v_k$ has a minimum in $D$. The strong maximum principle implies the harmonic function $u_k-v_k$ must be constant. However,
$$
c'(\delta)\leq (u_k-v_k)(r_ke^{i\theta})=\min _D(u_k-v_k) \to -\infty\s\mbox{as} \s k\to\infty
$$
yields a contradiction.
\endproof

Now we are ready to show the existence of conformal structures for the weak immersions with small total curvature.

\begin{pro}\label{chart} Let $f\in BL^n(D_2,\lambda,\Lambda)$ and assume $f$ is an embedding with
$$
\int_{D_2}|A_f|^2<4\pi-\gamma.
$$
Then there exists a  bijective map $\varphi:D\rightarrow D$,
such that $\varphi,\varphi^{-1}\in W^{2,2}_{loc}(D)\cap W^{1,\infty}_{loc}(D)$ and $f\circ\varphi\in W^{2,2}_{conf,loc}(D,\R^n)$.
\end{pro}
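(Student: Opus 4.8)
The plan is to reduce to the smooth case treated in Lemma~\ref{notapoint} by the smoothing of Lemma~\ref{approximation}, and to obtain $\varphi$ as a limit of isothermal coordinate changes of the approximations. After a translation in $\R^n$ assume $f(0)=0$. First I would apply the smoothing construction of Lemma~\ref{approximation} (on a disk $D_{r_0}$ with $1<r_0<2$, after a harmless rescaling) to produce smooth embeddings $f_k$ of $D_{r_0}$ with: $f_k\to f$ strongly in $W^{2,2}(D_{r_0})$; $\frac{\lambda}{2}g_0<df_k\otimes df_k<2\Lambda g_0$ on $\overline D$; $\int_{D_{r_0}}|A_{f_k}|^2\to\int_{D_{r_0}}|A_f|^2\le\int_{D_2}|A_f|^2<4\pi-\gamma$; and, after replacing $f_k$ by $f_k-f_k(0)$, $f_k(0)=0$. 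In particular $\int_{\overline D}|A_{f_k}|^2\,d\mu_{f_k}<4\pi-\gamma$ for large $k$. Since $g_k:=df_k\otimes df_k$ is a smooth metric on $\overline D$ uniformly comparable to $g_0$, the Korn--Lichtenstein theorem (cf.\ \cite{Chern}) gives a diffeomorphism $\Phi_k:\overline D\to\overline D$ with $\Phi_k^*g_k=e^{2u_k}g_0$, $u_k\in C^\infty(\overline D)$, normalized by $\Phi_k(0)=0$ and a fixed choice of the residual rotation; then $F_k:=f_k\circ\Phi_k$ is a smooth conformal immersion of $D$ with $F_k^*g_{\R^n}=e^{2u_k}g_0$ and $F_k(0)=0$, and $\Phi_k$, $\Phi_k^{-1}$ are $K_0$-quasiconformal self-maps of $D$ with $K_0=K_0(\lambda,\Lambda)$, being the change of coordinates between the conformal structures of $g_0$ and of the uniformly elliptic metric $g_k$.

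Next I would run Lemma~\ref{notapoint} on the $F_k$, checking its hypotheses uniformly in $k$. Reparametrization leaves the total curvature invariant, $\int_D|A_{F_k}|^2\,d\mu_{F_k}=\int_{\overline D}|A_{f_k}|^2\,d\mu_{f_k}<4\pi-\gamma$, and $\mu_{e^{2u_k}g_0}(D)=\mu_{g_k}(D)\le2\Lambda\pi$. For the distance hypothesis, since $\Phi_k$ is an isometry from $(\overline D,e^{2u_k}g_0)$ onto $(\overline D,g_k)$ fixing $0$, $d_{e^{2u_k}g_0}(0,\partial D_r)=d_{g_k}(0,\Phi_k(\partial D_r))$, and as $r\to1$ the curves $\Phi_k(\partial D_r)$ exhaust $\partial D$, so $\liminf_{r\to1}d_{e^{2u_k}g_0}(0,\partial D_r)\ge d_{g_k}(0,\partial D)\ge\sqrt{\lambda/2}=:\delta>0$. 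Lemma~\ref{notapoint} then yields, along a subsequence, $F_k\rightharpoonup F_\infty$ in $W^{2,2}(D_\rho)$ for every $\rho<1$ with $F_\infty$ a nonconstant $W^{2,2}$-conformal map, and $\|u_k\|_{L^\infty(D_\rho)}+\|\nabla u_k\|_{L^2(D_\rho)}\le C(\gamma,\Lambda,\rho)$. Passing to a further subsequence, $u_k\rightharpoonup u_\infty$ in $W^{1,2}_{loc}(D)$ and weakly-$*$ in $L^\infty_{loc}(D)$, whence $F_\infty^*g_{\R^n}=e^{2u_\infty}g_0$ with $e^{2u_\infty}$ bounded above and below on each $D_\rho$; thus $F_\infty\in W^{2,2}_{conf,loc}(D,\R^n)\cap W^{1,\infty}_{loc}(D)$ is an honest (branch-point-free) conformal immersion.

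Then I would pass to the limit in the $\Phi_k$. Lemma~\ref{local.embedding}, applied around each point of the compact set $\overline D$, together with the injectivity of $f$, gives that $f|_{\overline D}$ is bi-Lipschitz; the same argument, whose hypotheses are uniform for the $f_k$, combined with $f_k\to f$ uniformly on $\overline D$, makes $f_k|_{\overline D}$ uniformly bi-Lipschitz for large $k$, so $f_k^{-1}\to f^{-1}$ uniformly with uniform Lipschitz bounds. Since the normalized $K_0$-quasiconformal maps $\Phi_k:\overline D\to\overline D$ form a normal family (compactness of normalized $K$-quasiconformal self-maps of the disk, via Mori's theorem), along a subsequence $\Phi_k\to\varphi$ and $\Phi_k^{-1}\to\varphi^{-1}$ uniformly on $\overline D$, with $\varphi:D\to D$ a bijection. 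As $F_k=f_k\circ\Phi_k\to f\circ\varphi$ uniformly on $\overline D$ and $F_k\to F_\infty$ in $C^0_{loc}(D)$, we get $F_\infty=f\circ\varphi$ on $D$, so $f\circ\varphi\in W^{2,2}_{conf,loc}(D,\R^n)$. Finally, $\varphi=f^{-1}\circ F_\infty$ and $\varphi^{-1}=F_\infty^{-1}\circ f$ (legitimate since $f(D)=F_\infty(D)$); as $f$ and $F_\infty$ are $W^{2,2}_{loc}\cap W^{1,\infty}_{loc}$ immersions whose first fundamental forms are bounded away from $0$ on compacta, their inverses onto the image lie in $W^{2,2}_{loc}\cap W^{1,\infty}_{loc}$, and this class is stable under composition with bi-Lipschitz maps of the same regularity, giving $\varphi,\varphi^{-1}\in W^{2,2}_{loc}(D)\cap W^{1,\infty}_{loc}(D)$.

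The hard part is not the interior estimates, which come straight from Lemma~\ref{notapoint}, but controlling $\Phi_k$ and $\Phi_k^{-1}$ all the way up to $\partial D$: from Lemma~\ref{notapoint} alone one only controls $F_k$ and $u_k$ on $D_\rho$, $\rho<1$, and the limiting $\varphi$ could a priori be onto merely a compact subdisk of $D$ rather than all of $D$. The extra input that fixes this is the classical compactness and uniform H\"older estimate (Mori) for normalized quasiconformal self-maps of the disk, applied to the uniformly elliptic metrics $g_k$. An alternative is to take $\varphi:D\to D$ directly from Morrey's measurable Riemann mapping theorem and to prove $\Phi_k\to\varphi$ by stability of the normalized Beltrami solution under $L^p$-convergence of the (uniformly bounded) Beltrami coefficients $\mu_{g_k}\to\mu_g$; then the uniform bound $\|u_k\|_{L^\infty(D_\rho)}\le C(\rho)$ transfers to the conformal factor of $f\circ\varphi$, supplying exactly the $L^\infty$ information not available directly from Morrey's construction.
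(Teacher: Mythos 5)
Your proposal is correct and shares the paper's skeleton --- mollify via Lemma \ref{approximation}, put the smooth approximants in isothermal coordinates, verify the hypotheses of Lemma \ref{notapoint} to obtain uniform control of the conformal factors $u_k$, and pass to the limit --- but the final third is genuinely different. The paper never leaves the Sobolev framework: from $J(\varphi_k)J(\varphi_k)^{T}=e^{2u_k}G_k^{-1}$ it extracts uniform two-sided bounds on $d\varphi_k\otimes d\varphi_k$, then an explicit computation with $h_k=\langle\nabla\varphi_k,\nabla\varphi_k\rangle$ (equations \eqref{11}--\eqref{12}) gives uniform interior bounds $\|\nabla^2\varphi_k\|_{L^2(D_r)}\leq C(r)$ and likewise for $\psi_k=\varphi_k^{-1}$; bijectivity and regularity of the limit then follow from weak $W^{2,2}_{loc}$ plus $C^0$ convergence of both $\varphi_k$ and $\psi_k$. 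You instead invoke quasiconformal normal families (Mori) to force uniform convergence of $\Phi_k$ and $\Phi_k^{-1}$ on all of $\overline{D}$, and recover the Sobolev regularity of $\varphi$ a posteriori from $\varphi=f^{-1}\circ F_\infty$. Your route buys boundary control (the limit is automatically a homeomorphism of $\overline D$, so surjectivity is free, whereas the paper's argument for bijectivity via $\varphi(\psi(x))=x$ is only interior) and a cleaner verification of the distance hypothesis of Lemma \ref{notapoint}, using only $g_k\geq\tfrac{\lambda}{2}g_0$ rather than the paper's detour through the extrinsic distance $d_{\R^n}(0,f(\partial D))$ and the embedding property. The cost is your last step: concluding $\varphi\in W^{2,2}_{loc}$ from $\varphi=f^{-1}\circ F_\infty$ requires a second-order chain rule in which $f^{-1}$ is defined only on the $W^{2,2}$ image surface; this is precisely the computation the paper carries out, but on the smooth $\varphi_k$ with uniform bounds, which avoids differentiating the non-smooth inverse. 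You should either spell out that chain rule (differentiate $f\circ\varphi=F_\infty$ distributionally and invert $\nabla f$ using the metric lower bound) or, more safely, graft the paper's uniform Hessian estimates for $\Phi_k$ onto your argument and keep Mori only to identify the limit as a bijection of $D$.
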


\proof

By Lemma \ref{approximation}, there exists a sequence of smooth embeddings $\{f_k\}$ converging to $f$ strongly in $W^{2,2}(D_r)$ for any $r\in(0,2)$ with $\|A_{f_k}\|_{L^2(D_r)}\rightarrow\|A_f\|_{L^2(D_r)}$  and $\mu(D_r,g_k)\to \mu(D_r,g_f)<\infty$ as $k\to\infty$, where $g_k,g_f$ are the induced metrics by $f_k,f$, respectively.

Since $f_k$ is smooth,  the Riemannian disk $(D_2,g_k)$ is either conformal to $\C$ or $(D,g_0)$ by the uniformization theorem. So the simply connected proper subdomain 
$(D,g_k)$ is conformal to $(D,g_0)$. There is a diffeomorphism
$\varphi_k:(D,g_0)\rightarrow (D,g_k)$ with
$$
\bar{g}_k:=\varphi_k^*(g_k)=e^{2u_k}\left((dy^1)^2+(dy^2)^2\right)
$$
for some smooth function $u_k$. Further, we can assume 
$
f_k\circ\varphi_k(0)=0.
$
The areas $\mu(D,\bar g_k)=\mu(D,g_k)$ are uniformly bounded by some $\Lambda$ and  for large $k$
$$
\|A_{f_k\circ\varphi_k}\|_{L^2(D,\bar g_k)} = \|A_{f_k}\|_{L^2(D,g_k)}\leq 4\pi-\frac{\gamma}{2}.
$$

Note that $f_k$ converges to $f$ in $C^0(D_2)$ and $f$ is an embedding. For large $k$
$$
d_{\R^n}(0,f_k(\partial D))  > \frac{1}{2}d_{\R^n}(0,f(\partial D) ) :=\delta>0.
$$
Since $f_k\circ\varphi_k$ is an isometry from $(D,\bar g_k)$ to $(f_k(D),g_k)$,  for every $r\in(0,1)$ it holds
$$
d_{\bar g_k}(0,f_k\circ\varphi_k(\partial D_r))=d_{g_k}(0,f_k(\partial D_r)).
$$
As the extrinsic distance is no larger than the intrinsic distance, we have for large $k$
\begin{eqnarray*}
\liminf_{r\to 1}d_{\bar g_k}(0,f_k\circ \varphi_k(\partial D_r))
&=&\liminf_{r\to 1} d_{g_k}(0,f_k(\partial D_r))\\
&\geq& \liminf_{r\to 1} d_{\R^n}(0,f_k(\partial D_r))\\
&>&\frac{1}{2}d_{\R^n}(0,f(\partial D))\\
&=&\delta.
\end{eqnarray*}
Applying Lemma \ref{notapoint} to the conformal maps $f_k\circ\varphi_k$,
\begin{equation}\label{uk}
\|u_k\|_{L^\infty}(D_r)+\|\nabla u_k\|_{L^2(D_r)}<C(\gamma,r,\Lambda)\s\mbox{for any}\s r\in(0,1).
\end{equation}


Let $G_k(y)=(g_{ij}(\varphi_k(y)))$ and let
$J(\varphi_k)$ be the Jacobi of $\varphi_k$. Then
$$
e^{2u_k}I=J^T(\varphi_k)G_k J(\varphi_k).
$$
Hence
$$
J(\varphi_k)J(\varphi_k)^{T}=e^{2u_k}G_k^{-1}.
$$
It follows
\begin{equation}\label{iso}
\lambda(r) g_0<d\varphi_k\otimes d\varphi_k\leq \Lambda(r)g_0,
\end{equation}
where $0<\lambda(r)<\Lambda(r)$. Noting that $|\varphi_k|<1$ as the image of $\varphi_k$ lies in $D$,
we may assume $\varphi_k$ converges in $C^0(D_r)$ to
a map $\varphi$ for any $r\in(0,1)$.

Next, we bound the Hessian of $\va_k$ uniformly in $k$ for any $r\in(0,1)$: 
\begin{equation}\label{2nd}
\|\nabla^2 \varphi_k\|_{L^2(D_r)}<C(r).
\end{equation}

To see \eqref{2nd}, we first let $h_k(y)$ be the ${2\times 2}$-matrix whose entries are given by
$$
h_{k,ij}(y)=\frac{\partial\varphi_k(y)}{\partial y^i}
\frac{\partial\varphi_k(y)}{\partial y^j}=e^{2u_k(y)}g_{k}^{ij}(\varphi_k(y)).
$$
Then
$$
h_k^{-1}(y)=(h_k^{ij}(y))=e^{-2u_k(y)}G(y).
$$
By \eqref{uk}, \eqref{iso} and the fact that $\frac{\lambda}{2}g_0\leq df_k\otimes df_k\leq 2\Lambda g_0$, we have
\begin{eqnarray*}
\int_{D_r}|\nabla h_k^{-1}|^2&\leq&  C\int_{D_r}e^{-4u_k}\left(|\nabla f_k|^4|\nabla u_k|^2+|\nabla\varphi_k|^2|\nabla^2f_k|^2\right)< C(r).
\end{eqnarray*}
Since $\nabla h_k=-h_k(\nabla h_k^{-1})h_k$ and $h_k$ is bounded, we have
$$
\|\nabla h_k\|_{L^2(D_r)}+\|\nabla h_k^{-1}\|_{L^2(D_r)}<C(r).
$$
We now estimate $\nabla^2 \va_k$ as follows, for simplicity, write $\p_i \va_k, \p_{ij}\va_k$ for the first and second order derivatives respectively. Since $\p_i\va_k(y),i=1,2$ form a basis of $T_{\va_k(y)}\va_k(D)$ and $h_k=\langle \nabla\va_k,\nabla\va_k\rangle$, we have
\begin{eqnarray}\label{11}
\p_{11}\va_k&=&\langle \p_{11}\va_k,\p_i\va_k\rangle h^{ij}\p_j\va_k\\
&=&\langle \p_{11} \va_k, \p_1\va_k\rangle h^{1j}\p_j\va_k
+\langle \p_{11}\va_k,\p_2\va_k\rangle h^{2j}\p_j\va_k \nonumber\\
&=&\frac{1}{2}\left(\p_1|\p_1\va_k|^2\right)h^{1j}\p_1\va_k+\left(\p_1\langle\p_1\va_k,\p_2\va_k\rangle-\langle\p_1\va_k,\p_{12}\va_k\rangle\right)h^{2j}\p_j\va_k \nonumber\\
&=&\frac{1}{2}\left(\p_1 h_{k,11}\right)h^{1j}\p_1\va_k+\left(\p_1h_{k,12}-\frac{1}{2}\p_2h_{k,11}\right)h^{2j}\p_j\va_k \nonumber
\end{eqnarray}
and similar for $\p_{22}\va_k$;
\begin{eqnarray}\label{12}
\p_{12}\va_k&=& \langle \p_{12}\va_k,\p_i\va_k\rangle h^{ij}\p_j\va_k\\
&=&\langle \p_{12}\va_k,\p_1\va_k\rangle h^{1j}\p_j\va_k+\langle \p_{12}\va_k,\p_2\va_k\rangle h^{2j}\p_j\va_k \nonumber \\
&=&\frac{1}{2}(\p_2h_{k,11})h^{1j}\p_j\va_k +\frac{1}{2}(\p_1 h_{k,22})h^{2j}\p_j\va_k\nonumber
\end{eqnarray}
Hence \eqref{2nd} holds.

Now, we may assume $\varphi_k$ converges to $\va$ weakly in $W^{2,2}_{loc}(D)$. Thus $\varphi\in W^{2,2}_{loc}(D)\cap W_{loc}^{1,\infty}(D)$
and satisfies
$$
e^{2u(y)}\delta_{ij}=\frac{\partial\varphi^p}{\partial y^i}(y)
\frac{\partial\varphi^q}{\partial y^j}(y)g_{pq}(\varphi(y)),
$$
$$
J(\varphi)J(\varphi)^{T}=e^{2u}G^{-1},
$$
$$
\lambda'g_0<d\varphi\otimes d\varphi<\Lambda'g_0,
$$
where $0<\lambda'<\Lambda$.

Let $\psi_k(x)=\varphi_k^{-1}(x)$.
We have
\begin{equation}\label{isom}
g_{k,ij}(x)=\frac{\partial\psi_k(x)}{\partial x^i}
\frac{\partial\psi_k(x)}{\partial x^j}e^{2u_k(\psi_k(x))}.
\end{equation}
Hence $\|\nabla \psi_k\|_{L^\infty(D_r)}<C$.
Thus we may assume $\psi_k$ converges in $C^0$ to
a map $\psi$. Since $\varphi_k(\psi_k(x))=x$, we
see $\varphi(\psi(x))=x$. Then $\varphi$ is a bijective.

Using similar arguments, we can prove $\|\nabla^2\psi_k\|_{L^2(D_r)}<C(r)$. In fact, we just need to replace $h_k$ by $h_k^{-1}$ and $\va_k$ by $\psi_k$ in \eqref{11} and \eqref{12}.
Then $\psi\in W^{2,2}_{loc}(D)\cap W^{1,\infty}_{loc}(D)$. \endproof

\subsection{Extending conformal structures at the puncture}

With the preparation in sections 3.1 and 3.2, we now state and prove the main result in section 3.

\begin{thm}\label{conformal.W22}
Let $f\in W^{2,2}_{loc}(D_2\backslash\{0\},\R^n)$ be a $C^0$ immersion. Assume that

\begin{enumerate}

\item[1)]
For any $r\in (0,1)$ there is $\lambda(r)>0$, s.t.
$g:=df\otimes df>\lambda(r)g_0$ almost everywhere in $D_2\backslash D_r$, where $g_0$ is the Euclidean metric.
\item[2)] $\|\nabla f\|_{L^\infty(D_2)}<+\infty$.
\item[3)] $\int_{f(D_2\backslash\{0\})}|A|^2<+\infty$, where $A$ is the second fundamental form of $f$ on $D_2\backslash \{0\}$.
\end{enumerate}
Then  there exists a homeomorphism $\phi: D\rightarrow D$ with $\phi(0)=0$ that satisfies 
$\phi,\phi^{-1}\in W^{2,2}_{loc}(D\backslash\{0\})\cap W^{1,+\infty}_{loc}
(D\backslash\{0\})$
and
$
(\phi^{-1})^*g=e^{2u}g_0 \mbox{ on }D\backslash\{0\},$
where $u\in W^{1,2}_{loc}(D\backslash\{0\})\cap
L_{loc}^\infty(D\backslash\{0\}).
$
Moreover, $f\circ\phi^{-1}$ is branched $W^{2,2}$-conformal immersion of
the Euclidean unit disk $D$ in $\R^n$, with $0$ as its only possible branch point.
\end{thm}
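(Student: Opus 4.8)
\emph{Proof strategy.} The plan is to carry out three stages: construct a complex structure ${\mathcal C}$ on $D_2\backslash\{0\}$ compatible with $g$; determine the conformal type of a punctured disk around $0$ inside the resulting Riemann surface; and remove the singularity. (By (2), $f$ is locally Lipschitz on $D_2\backslash\{0\}$ and hence extends continuously to $D_2$; this is needed only at the very end.) For the first stage I would cover $D_2\backslash\{0\}$ by small open disks $U_\alpha\subset\subset D_2\backslash\{0\}$. Rescaling $U_\alpha$ affinely onto $D_2$, hypotheses (1) and (2) put the rescaled $f|_{U_\alpha}$ into some $BL^n(D_2,\lambda_\alpha,\Lambda_\alpha)$ (the lower bound $\lambda_\alpha$ depends on $\mathrm{dist}(U_\alpha,0)>0$ and is allowed to degenerate as $U_\alpha\to 0$), and after shrinking $U_\alpha$ the scale-invariant quantity $\int_{U_\alpha}|A_f|^2\,d\mu_f$ drops below $4\pi$ by (3); then Lemma \ref{local.embedding} makes $f|_{U_\alpha}$ an embedding and Proposition \ref{chart} furnishes an isothermal chart $\psi_\alpha:U_\alpha\to D$ with $\psi_\alpha,\psi_\alpha^{-1}\in W^{2,2}_{loc}\cap W^{1,\infty}_{loc}$ and $f\circ\psi_\alpha^{-1}\in W^{2,2}_{conf,loc}(D,\R^n)$. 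On overlaps, $\psi_\alpha$ and $\psi_\beta$ both push $g$ to conformal metrics, so $\psi_\alpha\circ\psi_\beta^{-1}$ is a $W^{1,\infty}_{loc}$ conformal homeomorphism of planar domains, hence holomorphic once orientations are fixed; thus $\{(U_\alpha,\psi_\alpha)\}$ is an atlas, $(D_2\backslash\{0\},{\mathcal C})$ is a Riemann surface, $f$ is a branch-free $W^{2,2}$-conformal immersion for it, and in ${\mathcal C}$-coordinates $g=e^{2u}g_0$ with $u\in W^{1,2}_{loc}\cap L^\infty_{loc}$ away from $0$.

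For the second stage, as recalled in the proof of Theorem \ref{t1}, uniformization leaves only three conformal types for $(D_2\backslash\{0\},{\mathcal C})$: a round annulus $D\backslash\overline{D_a}$, the punctured plane $\C\backslash\{0\}$, or the punctured disk $D\backslash\{0\}$. I would exclude the annulus using Proposition \ref{t2}: a conformal identification of $(D_2\backslash\{0\},{\mathcal C})$ with $D\backslash\overline{D_a}$ makes $f$ there a $W^{2,2}_{conf,loc}$ immersion of the annulus, so every homotopically non-trivial loop has $g$-length bounded below by a positive constant; but the Euclidean circles $\{|x|=\epsilon\}\subset D_2\backslash\{0\}$ satisfy $L_g(\{|x|=\epsilon\})\le 2\pi\epsilon\|\nabla f\|_{L^\infty}\to 0$ by (2), and under the identification they become non-trivial embedded loops of the same $g$-length, a contradiction. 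In each of the two remaining cases I would argue as in Cases (ii)--(iii) of the proof of Theorem \ref{t1}: the image of $\partial D$ under the uniformizing map is a Jordan curve separating the two ends of the model, and the Jordan curve theorem together with the Riemann mapping theorem (and an inversion when the relevant complementary component is unbounded) shows that $D\backslash\{0\}$ equipped with ${\mathcal C}$ is conformally equivalent to the standard Euclidean punctured disk, via a biholomorphism $\Phi:(D\backslash\{0\},g_0)\to(D\backslash\{0\},{\mathcal C})$ matching the two punctures.

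For the third stage, $\Phi$ is bounded and matches the punctures, so it extends holomorphically across $0$ with $\Phi(0)=0$; setting $\phi=\Phi^{-1}$, the maps $\phi,\phi^{-1}$ are homeomorphisms of $D$ fixing $0$, they lie in $W^{2,2}_{loc}\cap W^{1,\infty}_{loc}(D\backslash\{0\})$ by the first stage and the holomorphicity of the transitions, and $(\phi^{-1})^*g=\Phi^*g=e^{2u}g_0$ with $u\in W^{1,2}_{loc}\cap L^\infty_{loc}(D\backslash\{0\})$. Finally $f\circ\phi^{-1}=f\circ\Phi$ is a $W^{2,2}_{conf,loc}$ immersion of $D\backslash\{0\}$ into $\R^n$ with $\int_{D\backslash\{0\}}|A|^2<\infty$, with finite area (at most $\frac{1}{2}\pi\|\nabla f\|_{L^\infty}^2$), and with a continuous extension at $0$ (value $f(0)$); the removability theorem for isolated singularities of conformal immersions with square-integrable second fundamental form \cite{KSc,M-S,K-L} then yields that $f\circ\phi^{-1}$ extends to a branched $W^{2,2}$-conformal immersion of $D$ whose only possible branch point is $0$.

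I expect the genuine content to lie entirely in the exclusion of the round annulus, which is exactly where the delicate Proposition \ref{t2} is used; everything else is, in principle, bookkeeping. The heaviest such bookkeeping is the first stage: one has to check that each localized piece really does satisfy the hypotheses of Proposition \ref{chart} (a two-sided a.e.\ metric bound on each small chart, degenerating only near $0$, and smallness of the total curvature after shrinking) and that the resulting charts fit together, with holomorphic transitions, into a $W^{2,2}_{loc}\cap W^{1,\infty}_{loc}$ atlas on which $f$ becomes honestly conformal. A second, conformal, subtlety -- inherited from Theorem \ref{t1} -- is that one cannot exclude the $\C\backslash\{0\}$ type for the whole punctured disk; passing to the strictly smaller disk $D=D_1\subset D_2$, so that $\partial D$ becomes a separating Jordan curve in the uniformizing model, is what gets around this.
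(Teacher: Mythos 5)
Your stages 1 and 2 track the paper's proof: the same covering argument with Lemma \ref{local.embedding} and Proposition \ref{chart} produces the atlas with holomorphic transition maps, and the same application of Proposition \ref{t2} to the shrinking circles $\gamma_\epsilon$ excludes the finite-annulus type. The gap is in stage 3, where you assert that the uniformizing biholomorphism $\Phi:(D\backslash\{0\},g_0)\to(D\backslash\{0\},{\mathcal C})$ can be taken to ``match the two punctures'' as a consequence of the Jordan curve theorem and the Riemann mapping theorem. This is not automatic, and it is exactly the point on which the paper spends the last third of its proof. The complex structure ${\mathcal C}$ is controlled only away from $0$, so a priori the topological end of $D\backslash\{0\}$ at $0$ could correspond, under the uniformization $\Psi$ of $(D_2\backslash\{0\},{\mathcal C})$ onto the model $D\backslash\{0\}$, to the \emph{circle} end of the model rather than to its puncture. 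In that scenario $\Psi(D\backslash\{0\})$ is the component of the complement of the Jordan curve $\Psi(\partial D)$ adjacent to the circle end, which is a finite-modulus annulus; no puncture-matching $\Phi$ exists, and $\Phi(z)$ would accumulate on $\partial D$ as $z\to 0$, so your claimed extension with $\Phi(0)=0$ fails. (A smaller point: $\Phi$ does not ``extend holomorphically across $0$'' --- as a map into the plane it is not holomorphic, since the target carries ${\mathcal C}$ rather than the standard structure; what is needed, and must be proved, is a continuous extension.)

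To close the gap you must rule out this end-swapping scenario. The paper does so by first establishing a topological dichotomy (for a homeomorphism of the punctured disk onto itself, the image of $w_k\to 0$ accumulates either only at $0$ or only at $\partial D$) and then applying Proposition \ref{t2} a second time: if $\phi^{-1}(w_k)\to\partial D$, then $\phi^{-1}(D_r\backslash\{0\})\subset D\backslash\overline{D_{1/2}}$ for small $r$, the map $f\circ\phi$ is $W^{2,2}_{conf,loc}$ on that outer annulus, and the essential loops $\phi^{-1}(\gamma_\epsilon)$ have $g$-length tending to $0$, contradicting the length lower bound. Equivalently, you could note that your stage-2 annulus exclusion applies verbatim to $(D_r\backslash\{0\},{\mathcal C})$ for every $r$ (the loops $\gamma_\epsilon$ eventually lie in any such subdisk), which forbids $\Psi(D\backslash\{0\})$ from being the finite-modulus component. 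Either way, an explicit argument is required where you currently have only the phrase ``matching the two punctures''; the rest of your outline is sound and coincides with the paper's.
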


\proof By Lemma \ref{local.embedding},
for any $p\in D_2\backslash \{0\}$, we can find a disk $D_\delta(p)
\subset D\backslash \{0\}$, such that $f$ is an embedding on
$D_\delta(p)$ and we may further assume $\|A\|_{L^2(D_\delta(p),g)}^2<4\pi-\gamma$ by 3). By Proposition \ref{chart}, there exists $\varphi_{D_\delta(p)}:D_\delta(p)\rightarrow
D$,  $\varphi_{D_\delta(p)},\varphi^{-1}_{D_\delta(p)}\in W^{1,\infty}\cap
W^{2,2}$ and
\begin{equation}\label{conformal}
\left(\varphi_{D_\delta(p)}^{-1}\right)^*g=e^{2u_{D_\delta(p)}}g_0.
\end{equation}

Thus, we can find a countable open cover $\mathcal{U}=
\{U_i\}_{i\in{\mathcal I}}$ of $D_2\backslash \{0\}$, where $U_i$ is a disk $D_{\delta_i}(p_i)$ as above, such that
there is a homeomorphism $\varphi_i$ from
$U_i$ to an open set of $\R^2$, such that $\varphi_i,\varphi^{-1}_i\in W^{1,\infty}\cap W^{2,2}$. Then $$\phi_{ij}=\varphi_j\circ\varphi_i^{-1}:\varphi_i(U_i\cap U_j)
\rightarrow\varphi_j(U_i\cap U_j)$$
is in $W^{2,2}\cap W^{1,\infty}$ with
$$
e^{2u_i}g_0=\phi_{ij}^*(e^{2u_j}g_0).
$$
Thus, $\phi_{ij}$ is conformal. Furthermore, we can assume that for each pair of $i,j$ the orientation induced by $\va_{ij}$ on $U_i\cup U_j$ is the same as that of the oriented surface $D\backslash\{0\}$. Then $\phi_{ij}$ is holomorphic. Then ${\mathcal A}=\{(U_i,\varphi_i):i\in{\mathcal I}\}$
is an atlas on $D_2\backslash\{0\}$ which defines a smooth structure and a
complex structure ${\mathcal C}$ on $D_2\backslash \{0\}$. The identity map $\mbox{Id}: (D_2\backslash\{0\},{\mathcal C})\to (D\backslash\{0\}, g_0)$, in local coordinates, is $\mbox{Id}\circ\va^{-1}_i(y)$ which is in $W^{2,2}_{loc}\cap W^{1,\infty}_{loc}$. Then $f$ is a $W^{2,2}$-conformal immersion from $(D_2\backslash\{0\},{\mathcal C})\to (\R^n,\langle\cdot,\cdot\rangle)$, by noting that the left side of \eqref{conformal} is $g$ in the chart $(D_\delta(p), \va_{D_\delta(p)})$. Note that the smooth structure determined by 
${\mathcal A}$ is different from the standard one on $D_2\backslash\{0\}$ unless $\va_i$'s are smooth.

We now show that $(D_2\backslash\{0\},{\mathcal C})$ is not conformal to the annulus $(D\backslash\overline{D}_a,g_0)$ for any $a\in(0,1)$.
If there exists a conformal diffeomorphism $\psi:(D\backslash\overline{D}_a,g_0)\to(D_2\backslash\{0\},{\mathcal C})$, then $\overline{f}=f\circ\psi:(D\backslash\overline{D}_a,g_0)\to\R^n$ is in $W^{2,2}_{conf,loc}$ and $\|\nabla \overline{f}\|_{L^\infty}<\infty$, where we may restrict $\overline{f}$ to a smaller annulus $D_{1-r_0}\backslash\overline{D}_{a+r_0}$ so that $\|\nabla^2\psi\|_{L^\infty}+\|\nabla\psi\|_{L^\infty}$ is bounded.  Then
$$
\lim_{\epsilon\rightarrow 0}L_{\overline g}(\psi^{-1}(\gamma_\epsilon))=\lim_{\epsilon\rightarrow 0}
L_g(\gamma_\epsilon)=0,
$$
where $\gamma_\epsilon=(\epsilon\cos\theta,\epsilon\sin\theta)$. But this contradicts Proposition \ref{t2}.

Then $(D_2\backslash\{0\},{\mathcal C})$ is conformal to $D\backslash\{0\}$ or $\C\backslash\{0\}$. For both cases, $(D\backslash\{0\},{\mathcal C})$ is conformal to $D\backslash\{0\}$. Thus, we can find  a conformal diffeomorphism $\phi:D\backslash\{0\}\rightarrow
(D\backslash\{0\},{\mathcal C})$. Then $\phi,\phi^{-1}\in W^{2,2}_{loc}\cap W^{1,\infty}_{loc}$ over $D\backslash\{0\})$.

Next, we show $\lim_{z\rightarrow 0}\phi(z)=0$. We will use the following:

{\it Fact}: if $\varphi:D\backslash\{0\}\rightarrow
D\backslash\{0\}$ is a continuous homeomorphism, then for
any $z_k\rightarrow 0$, $|\varphi(z_k)|\rightarrow 0$
or $1$, {\it i.e.}, $\varphi(z_k)$ converges to $\partial D\cup
\{0\}$.

This is because $\varphi(z_k)$ cannot have any accumulation points in
$D\backslash\{0\}$: if this were not true, then there would exist a small disk $D(r_0)(q)\subset D\backslash\{0\}$ centered at an accumulation point $q\in D\backslash\{0\}$ and $\varphi^{-1}(D_{r_0}(q))$ contains infinitely many $z_k$; but  this contradicts $x_k\to 0$. Moreover, for a given $\varphi$, only one of the limiting behavior of $|\varphi(z_k)|$ can occur: if there were
$z_k\to 0$ and $z_k'\to 0$ with $|\varphi(z_k)|\to 0$ and $|\varphi(z_k')|\to 1$ then the intermediate value theorem would imply existence of $z_k''\to 0$ with
$|\varphi(z_k'')|=\frac{1}{2}$ hence $z_k''$ would have at least one accumulation point on $\p D_{\frac{1}{2}}$.

By this Fact,  $\lim_{z\rightarrow 0}\phi(z)=0$
is equivalent to $\lim_{w\rightarrow 0}\phi^{-1}(w)=0$.
Now, we prove $\lim_{w\rightarrow 0}\phi^{-1}(w)=0$ by contradiction.
Assume that there exists $w_k\rightarrow 0$, such that
$\phi^{-1}(w_k)\rightarrow p\in\partial D$.
We claim that there exists a small $r>0$ with
$\phi^{-1}(D_r\backslash\{0\})\subset D\backslash \overline{D_\frac{1}{2}}$.
Otherwise, we can find $w_k'\rightarrow 0$ with
$|\phi^{-1}(w_k')|\leq\frac{1}{2}$. By the intermediate value theorem,
there exists $w_k''$ in $\overline{D}_{|w_k|}\backslash
D_{|w_k'|}$ or $\overline{D}_{|w_k'|}\backslash D_{|w_k|}$ such that $|\phi^{-1}(w_k'')|=\frac{1}{2}$. But this is impossible for
$w_k''\rightarrow 0$. The claim asserts
$f\circ\phi\in W^{2,2}_{conf,loc}(D\backslash \overline{D_\frac{1}{2}},\R^n)$, and
$\phi^{-1}(\gamma_\epsilon)\subset D\backslash \overline{D_\frac{1}{2}}$,
where $\gamma_\epsilon=(\epsilon\cos\theta,\epsilon\sin\theta)$.
Then
$$
\lim_{\epsilon\rightarrow 0}L_{ g_{f(\phi)}}(\phi^{-1}(\gamma_\epsilon))=\lim_{\epsilon\rightarrow 0}
L_{g_f}(\gamma_\epsilon)=0.
$$
 But this contradicts Proposition \ref{t2}. \endproof






For general surfaces with punctures, we have:

\begin{cor}\label{global}
Let $(\Sigma,h)$ be an oriented surface (may not compact) and let
$S\subset\Sigma$ be a finite set. Suppose
$f\in W^{2,2}_{loc}(\Sigma\backslash S,\R^n)$
and $f$ is a $C^0$ immersion on $\Sigma\backslash S$. Assume that
\begin{enumerate}
\item[1)]
For any $r\in (0,1)$, there exists $\lambda(r)$, such that it holds a.e. in $ \Sigma\backslash \bigcup_{p\in S}B_r(p)$
$$
g=df\otimes df>\lambda(r)h.
$$
\item[2)]
$\|\nabla f\|_{L^\infty(\Sigma,h)}<\infty$.
\item[3)]
$\int_{\Sigma}|A|^2<\infty$, where $A$ is the second fundamental form of $f$ on $\Sigma\backslash S$.
\end{enumerate}
Then there is a complex structure $c$ on $\Sigma$ such that $f:(\Sigma,c)\to\R^n$ is a branched $W^{2,2}$-conformal immersion with its branch locus contained in $S$.

\end{cor}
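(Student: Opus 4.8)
\proof
We build a complex structure $c$ on $\Sigma$ by amalgamating the local conformal charts on $\Sigma\backslash S$ coming from Proposition \ref{chart} with one extra chart around each puncture coming from Theorem \ref{conformal.W22}, and then read off the conclusion chart by chart. First I treat $\Sigma\backslash S$: for each $p\in\Sigma\backslash S$, Lemma \ref{local.embedding} provides a coordinate disk about $p$, relatively compact in $\Sigma\backslash S$, on which $f$ is an embedding; there $h$ and $g_0$ are uniformly comparable, so by 1)--2) one has $\lambda g_0<df\otimes df<\Lambda g_0$, and after shrinking we may further assume $\int|A|^2<4\pi$ there, so Proposition \ref{chart} applies and produces a chart $\varphi_p$ with $\varphi_p,\varphi_p^{-1}\in W^{2,2}_{loc}\cap W^{1,\infty}_{loc}$ and $(\varphi_p^{-1})^*g=e^{2u_p}g_0$. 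Exactly as in the proof of Theorem \ref{conformal.W22}, the transition maps between any two such charts lie in $W^{2,2}\cap W^{1,\infty}$, are conformal, and---after arranging orientations to agree with that of $\Sigma$---are holomorphic; this gives a complex structure $c_0$ on $\Sigma\backslash S$ for which $f$ is an unbranched $W^{2,2}$-conformal immersion.

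Next I produce a chart across each puncture. Fix $p\in S$ and a relatively compact coordinate neighborhood of $p$ in $\Sigma$, identified with $D_2$ and with $p\leftrightarrow 0$; shrinking it we may assume it meets $S$ only at $p$. On $D_2$ the metrics $h$ and $g_0$ are uniformly comparable, so the three hypotheses of the corollary translate into the three hypotheses of Theorem \ref{conformal.W22} for $f|_{D_2\backslash\{0\}}$ (for the first, given $\rho\in(0,1)$ choose $r$ with the $h$-ball $B_r(p)$ contained in $D_\rho$ and use $g>\lambda(r)h\geq c\,\lambda(r)g_0$). Theorem \ref{conformal.W22} then yields a homeomorphism $\phi_p:D\to D$ with $\phi_p(0)=0$ and $\phi_p,\phi_p^{-1}\in W^{2,2}_{loc}\cap W^{1,\infty}_{loc}$ on $D\backslash\{0\}$, which off $0$ is conformal between the standard disk and the structure on $D_2\backslash\{0\}$ assembled from Proposition \ref{chart} charts (namely the restriction of $c_0$), and such that $f\circ\phi_p^{-1}$ is a branched $W^{2,2}$-conformal immersion of $D$ with $0$ as its only possible branch point. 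I take $\phi_p$, composed with the identification, as a coordinate chart $\psi_p$ near $p$ with $\psi_p(p)=0$.

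It then remains to check that $\{\psi_p\}_{p\in S}$ together with the charts of $c_0$ form a holomorphic atlas on $\Sigma$. On a punctured overlap, the transition map between $\psi_p$ and a chart of $c_0$ is a composition of orientation-preserving, conformal, $W^{2,2}_{loc}\cap W^{1,\infty}_{loc}$ homeomorphisms, hence holomorphic there (a Lipschitz map satisfying the Cauchy--Riemann equations almost everywhere is holomorphic), and being bounded near the puncture it extends holomorphically across it by Riemann's removable singularity theorem. This produces a complex structure $c$ on $\Sigma$ restricting to $c_0$ on $\Sigma\backslash S$. In any $c_0$-chart $f$ is an unbranched $W^{2,2}$-conformal immersion, while in the chart $\psi_p$ it equals $f\circ\phi_p^{-1}$, a branched $W^{2,2}$-conformal immersion with at most $p$ as a branch point. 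Hence $f:(\Sigma,c)\to\R^n$ is a branched $W^{2,2}$-conformal immersion with branch locus contained in $S$.

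The main obstacle is the gluing step: verifying that each puncture chart $\psi_p$ is holomorphically compatible with the charts of $c_0$. This relies on the Weyl-type regularity of the transition maps and on the removability of the isolated singularity, and the latter is legitimate precisely because $\phi_p$ extends continuously at the puncture---the substance of Theorem \ref{conformal.W22}, in whose proof the hard analysis (notably Proposition \ref{t2}, which rules out the annular conformal type) has already been carried out.
\endproof
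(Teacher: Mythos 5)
Your proposal is correct and follows essentially the same route as the paper: cover $\Sigma\backslash S$ with charts from Lemma \ref{local.embedding} and Proposition \ref{chart}, add one chart per puncture from Theorem \ref{conformal.W22}, and check that all transitions are holomorphic. (The only superfluous step is the appeal to Riemann's removable singularity theorem: the overlap of a puncture chart with any chart of $c_0$ already avoids the puncture, so there is no isolated singularity to remove.)
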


\proof
Applying Lemma \ref{local.embedding}  and Proposition \ref{chart}, for any $p\notin S$, we  choose a neighborhood $U$ of $p$ and a homeomorphism $\varphi_U$ from $U$ to $D$, such that $\varphi_U\in W^{2,2}(U,\R^2)\cap W^{1,\infty}(U,\R^2)$ and
\begin{equation}\label{cor}
(\varphi^{-1}_U)^*(g)=e^{2u_U}g_0.
\end{equation}
For $p\in S$, by Theorem \ref{conformal.W22}, there exist a neighborhood $U$ of $p$ with $U\cap S=p$
and a homeomorphism $\varphi_{U}$ from $U$ to $D$ that is in $W^{2,2}_{loc}(U\backslash\{0\},\R^2))\cap W_{loc}^{1,\infty}(U\backslash\{0\},\R^2)$ and $\varphi_{U}(p)=0$ and \eqref{cor} holds. As in the proof of Theorem \ref{conformal.W22},  we have an atlas $\mathcal{A}=
\{(U_i,\va_i):i\in{\mathcal I}\}$ of $\Sigma$, such that
for any $i,j\in{\mathcal I}$ the transition function $\phi_{ij}=\varphi_j\circ\varphi_i^{-1}$ is holomorphic, using the given orientation  on $\Sigma$ to adjust if necessary, and
in $W^{2,2}_{loc}\cap W_{loc}^{1,\infty}$. Thus ${\mathcal A}$
determines a smooth structure and a
complex structure on $\Sigma$. Take a smooth metric $h'$ that is compatible  with the new complex structure $c$. The identity map from $(\Sigma,h')$ to $(\Sigma,h)$ is homeomorphic and is in $W^{2,2}_{loc}(\Sigma\backslash S)\cap W^{1,\infty}_{loc}(\Sigma\backslash S)$. Therefore, $f:(\Sigma,c)\to\R^n$ is a branched $W^{2,2}$-conformal immersion with $S$ as its possible branch locus.
\endproof

\end{document}